\newtheorem{theorem}{Theorem}
\newtheorem{corollary}[theorem]{Corollary}
\newtheorem{lemma}{Lemma}
\newtheorem{remark}{Remark}
\newtheorem{assumption}{Assumption}
\def\BibTeX{{\rm B\kern-.05em{\sc i\kern-.025em b}\kern-.08em
T\kern-.1667em\lower.7ex\hbox{E}\kern-.125emX}}
\newcommand{\diag}{\mbox{diag}}
\begin{document}
\title{Boundary control of multi-dimensional discrete-velocity kinetic models}
\author{Haitian Yang, Wen-An Yong
\thanks{Haitian Yang is with 
the Department of Mathematical Sciences, Tsinghua University, Beijing 100084, China (e-mail: yht21@mails.tsinghua.edu.cn).}
\thanks{Wen-An Yong is with the Department of Mathematical Sciences, Tsinghua University, Beijing 100084, China and with Beijing Institute of Mathematical Sciences and Applications, Beijing 101408, China (e-mail: wayong@tsinghua.edu.cn).}}

\maketitle

\begin{abstract}
	This technical note is concerned with boundary stabilization of multi-dimensional discrete-velocity kinetic models. By exploiting a certain stability structure of the models and adapting an appropriate Lyapunov functional, we derive feasible control laws so that the corresponding solutions decay exponentially in time. The result is illustrated with an application to the two-dimensional coplanar model in a square container. The effectiveness of the derived control laws is confirmed by numerical simulations.
\end{abstract}

\begin{IEEEkeywords}
	Boundary stabilization,
	Control laws,
	Hyperbolic relaxation systems,
	Structural stability condition,
	Discrete velocity models.
\end{IEEEkeywords}

\section{Introduction}
We are interested in boundary stabilization of discrete velocity models of the Boltzmann equation in the kinetic theory \cite{Gatignol1975}. This kind of models was first suggested by James C. Maxwell and considered again in Carleman’s book \cite{carleman1957problemes,platkowski1988discrete}. They are systems of multi-dimensional first-order hyperbolic equations with source terms (balance laws) and can be used to simulate behaviours of
rarefied gases \cite{inamuro1990numerical}, to investigate the shock structure \cite{gatignol1975kinetic}, to study the Couette and Rayleigh flow \cite{Gatignol1975}, etc. 
The growing interest thereof during the last fifty years parallels the increasing activity in the theory of the Boltzmann equation itself. The connection between these models and the Boltzmann equation was studied in \cite{palczewski1997consistency}. 

Over the past two decades, the boundary control problem of hyperbolic balance laws has attracted much attention in the mathematical and engineering community due to wide applications. In \cite{li2010strong}, exact boundary controllability for quasi-linear systems was shown via the characteristics method. A general result using a Lyapunov function with a smallness assumption on the source terms can be found in \cite{bastin2016stability,DIAGNE2012109}. 
In \cite{Hu2016,Hu2019}, general linear or quasi-linear coupled systems were treated with the backstepping method. We note that the above works are mainly for the spatially one-dimensional problems with non-characteristic boundaries. We also refer to \cite{AURIOL2016300,BASTIN201766,coron2022lyapunov,DEUTSCHER201754,HAYAT201952,yu2022traffic} and the references cited therein for related state-of-the-art research. For characteristic boundaries, some progresses have been made in  \cite{de2022backstepping,YONG2019252} recently.

As in one-dimensional problems, source terms can cause instability in the overall system \cite{yu2022traffic}. Similar instability can be expected for multi-dimensional problems.
To the best of our knowledge, there are only few works about multi-dimensional problems with source terms \cite{herty2022stabilization,xu2002exponential,yang2023feedback}.
In \cite{xu2002exponential}, boundary stabilization was achieved for symmetric hyperbolic systems under certain ad hoc assumptions (see formulas (2.28) and (2.32) therein). In \cite{herty2022stabilization}, the authors imposed strict assumptions on the source terms (see formulas (2.3) and (2.6) therein). The assumptions do not seem straightforward to check whether they are true or not. On the other hand, our recent work \cite{yang2023feedback} uses a physically relevant dissipation structure \cite{yong1999singular} to achieve boundary stabilization of the two-dimensional Saint-Venant equations.

The aim of this technical note is to extend the Lyapunov function method \cite{bastin2016stability,HERTY201612} and investigate the multi-dimensional discrete velocity models by exploiting the dissipation structure of the systems. This dissipation structure is characterized by the  structural stability condition proposed in \cite{yong47basic,yong1999singular,yong2008interesting}. As shown in \cite{yong2008interesting}, the structure defines a wide class of physically relevant problems, has its root in non-equilibrium thermodynamics, and is closely related to the celebrated Onsager reciprocal relation \cite{PhysRev.37.405,PhysRev.38.2265}. We refer to \cite{HERTY201612,WANG2020104815,YONG2019252} for other recent works exploiting the structural stability condition to achieve boundary stabilization in the one-dimensional case.

The technical note is organized as follows. We first show that the discrete velocity models satisfy the structural stability condition. Then we generalize the modified Lyapunov function in \cite{bastin2016stability,HERTY201612} which takes full advantage of the dissipation structure of the systems to achieve boundary stabilization for the multi-dimensional discrete velocity models. Finally, we take the two-dimensional coplanar model \cite{platkowski1988discrete} as an example to illustrate that numerous boundary control laws can be chosen according to the practical situations, as stated in \cite{yang2023feedback}.

\section{Preliminaries}	\label{section2}
Consider a gas of identical particles moving within a bounded domain $\Omega \subset \mathbb{R}^d$ with piecewise smooth boundary. We postulate that each particle can move only with one of the velocities $u_i$ in a given finite set $S \subset \mathbb{R}^d$:
$$
S=\{ u_k \in \mathbb{R}^d: \ k=1,\cdots,n\}.
$$
We denote by $\tilde{f}_k=\tilde{f}_k(t,x)$ the number density of gas particles moving with the constant
velocity $u_k\in S$ at time $t$ ($t\geq 0$) and position $x=(x_1,\cdots,x_d) \in \bar{\Omega}$, and by $\tilde{f}=(\tilde{f}_1,\cdots,\tilde{f}_n)^T.$ Here and below, the superscript $^T$ is the transpose of a vector or matrix. 

In the absence of external forces, the governing equation for each $\tilde{f}_k$ is \cite{Gatignol1975,platkowski1988discrete}
$$
\partial_t \tilde{f}_{k}+u_k\cdot \nabla_x \tilde{f}_k=Q_k(\tilde{f}), \quad k=1,\cdots,n,
$$
where $u_k\cdot \nabla_x$ denotes the differential operator $\sum_{j=1}^d u_{kj}\partial_{x_j}$ with $u_{kj}$ the $j$-th component of the vector $u_k$. For binary collisions, the source term $Q_k(\tilde{f})$ is given by
\begin{equation*}
	Q_k(\tilde{f})=\sum_{ijl} (A_{ij}^{kl}\tilde{f}_i\tilde{f}_j-A_{kl}^{ij}\tilde{f}_k\tilde{f}_l),
\end{equation*}
where the summation is taken over all $i,j,l \in \{1,2,\cdots,n\}$ and the non-negative constant coefficients $A_{ij}^{kl}$ stand for the transition rate in collision and satisfy
\begin{equation} \label{2.1}
	A_{ij}^{kl}=A_{kl}^{ij}=A_{lk}^{ij} \geq 0.
\end{equation} 

Denote $(Q_1(\tilde{f}),\cdots,Q_n(\tilde{f}))^T$ by $Q(\tilde{f}).$ Then the  kinetic model above can be rewritten in its vector form:
\begin{equation} \label{2.2}
	\tilde{f}_t+\sum_{j=1}^d \begin{pmatrix}
		u_{1j} & 0 & \cdots & 0 \\
		0 & u_{2j} & \cdots & 0 \\
		\vdots & \vdots &\ddots & \vdots \\
		0 & 0 & \cdots & u_{nj}
	\end{pmatrix}\tilde{f}_{x_j}=Q(\tilde{f}).
\end{equation}
Here and below, the subscripts $t$ and $x_j$ denote the corresponding
partial derivatives. This is a semi-linear first-order hyperbolic system with a source term. On physical grounds, the solution components of (\ref{2.2}) should be non-negative since $\tilde{f}_k$ is a number density. Thus we introduce the following cone as the state space
$$
G=\{\tilde{f}\in \mathbb{R}^n: \ \tilde{f}_k>0, \ k=1,\cdots,n\}.
$$
For the balance law (\ref{2.2}), a constant vector $f_e \in G \subset \mathbb{R}^n$ is called a uniform steady state \cite{bastin2016stability} if it satisfies  $Q(f_e)=0$.

With the symmetry property (\ref{2.1}), it is shown in Section IX of \cite{yong2008interesting} that the source term can be written as
\begin{equation*} 
	Q(\tilde{f})=-\mathcal{L}(\tilde{f})\eta_{\tilde{f}}(\tilde{f})\equiv -\mathcal{L}(\tilde{f})\begin{pmatrix}
		\log \tilde{f}_1 \\
		\vdots \\
		\log \tilde{f}_n
	\end{pmatrix}, \qquad \tilde{f} \in G,
\end{equation*}
where the $n \times n$-matrix $\mathcal{L}(\tilde{f})$ is symmetric positive semi-definite and has a $\tilde{f}$-independent null space, 
$$
\eta(\tilde{f})=\sum_{k=1}^n \tilde{f}_k(\log \tilde{f}_k-1)
$$
is an entropy function, and $\eta_{\tilde{f}}(\tilde{f})$ denotes the gradient of $\eta(\tilde{f})$. 
Remarkably, the symmetry of $\mathcal{L}(\tilde{f})$ corresponds to the celebrated Onsager reciprocal relation in non-equilibrium thermodynamics \cite{PhysRev.37.405,PhysRev.38.2265,yong2008interesting}. And the $\tilde{f}$-independence expresses the fact that the physical laws of conservation hold true, no matter what state the underlying thermodynamical system is in equilibrium, nonequilibrium, and so on \cite{yong2008interesting}.

Next, we present the following lemma implicitly proved in \cite{yong2008interesting}, showing that the system (\ref{2.2}) satisfies the structural stability condition proposed in \cite{yong47basic,yong1999singular} for general nonlinear multi-dimensional hyperbolic relaxation systems.
\begin{lemma} \label{L2.1}
	For a given steady state $f_e \in G,$ there exists an invertible matrix $P$  and a diagonal positive definite matrix $\Lambda_0$ such that
	\begin{equation} \label{2.3}
		PQ_{\tilde{f}}(f_e)P^{-1}=-\begin{pmatrix}
			0 & 0 \\
			0 & \Lambda
		\end{pmatrix}
	\end{equation}
	and
	\begin{equation} \label{2.4}
		\qquad \Lambda_0 Q_{\tilde{f}}(f_e)=-P^T\begin{pmatrix}
			0 & 0 \\
			0 & \Lambda
		\end{pmatrix} P,
	\end{equation}
	where $\Lambda \in \mathbb{R}^{r\times r}$ is a symmetric positive definite matrix with $(n-r)$ the dimension of the null space of $\mathcal{L}(f_e).$ 
\end{lemma}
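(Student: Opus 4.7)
The plan is to compute $Q_{\tilde{f}}(f_e)$ directly from the entropy factorization $Q(\tilde f)=-\mathcal{L}(\tilde f)\eta_{\tilde f}(\tilde f)$ and then diagonalize the resulting matrix by a single change of variables that is simultaneously a similarity and an entropy-weighted congruence. The product rule gives
$$Q_{\tilde f}(f_e)\delta=-\bigl[\mathcal{L}_{\tilde f}(f_e)\delta\bigr]\eta_{\tilde f}(f_e)-\mathcal{L}(f_e)\eta_{\tilde f\tilde f}(f_e)\delta.$$
Since $Q(f_e)=0$, we have $\eta_{\tilde f}(f_e)\in\ker\mathcal{L}(f_e)$. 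The $\tilde f$-independence of this null space forces $\mathcal{L}(\tilde f)\eta_{\tilde f}(f_e)\equiv 0$ for every $\tilde f$; differentiating this identity yields $\bigl[\mathcal{L}_{\tilde f}(f_e)\delta\bigr]\eta_{\tilde f}(f_e)=0$ for every $\delta$, killing the first summand. This leaves the clean factorization
$$Q_{\tilde f}(f_e)=-\mathcal{L}(f_e)D_e,\qquad D_e:=\eta_{\tilde f\tilde f}(f_e)=\diag\bigl(1/f_{e,1},\ldots,1/f_{e,n}\bigr).$$

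Next I would set $\Lambda_0:=D_e$, which is diagonal and positive definite because $f_e\in G$. With this choice $-\Lambda_0 Q_{\tilde f}(f_e)=D_e\mathcal{L}(f_e)D_e$ is manifestly symmetric positive semi-definite, of rank equal to $\mathrm{rank}\,\mathcal{L}(f_e)=r$. Applying the spectral theorem to the symmetric PSD matrix $D_e^{1/2}\mathcal{L}(f_e)D_e^{1/2}$ produces an orthogonal matrix $V$ and an $r\times r$ positive definite diagonal $\Lambda$ with
$$V^T D_e^{1/2}\mathcal{L}(f_e)D_e^{1/2}V=\begin{pmatrix}0&0\\0&\Lambda\end{pmatrix}.$$
I then define $P:=V^T D_e^{1/2}$, which is invertible with $P^{-1}=D_e^{-1/2}V$.

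Verification of (\ref{2.3}) and (\ref{2.4}) is then immediate. For (\ref{2.3}), $PQ_{\tilde f}(f_e)P^{-1}=-V^T D_e^{1/2}\mathcal{L}(f_e)D_e^{1/2}V=-\diag(0,\Lambda)$. For (\ref{2.4}), one computes $P^T\diag(0,\Lambda)P=D_e^{1/2}V\diag(0,\Lambda)V^T D_e^{1/2}=D_e\mathcal{L}(f_e)D_e=-\Lambda_0 Q_{\tilde f}(f_e)$.

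The principal obstacle is fulfilling (\ref{2.3}) and (\ref{2.4}) simultaneously with the \emph{same} $P$ while keeping $\Lambda_0$ \emph{diagonal}. Condition (\ref{2.4}) essentially requires $\Lambda_0$ to coincide with $P^T P$ on the range of the right-hand matrix, so the diagonality of $\Lambda_0$ forces a splitting of $P$ into an orthogonal factor and a diagonal factor. The factorization $Q_{\tilde f}(f_e)=-\mathcal{L}(f_e)D_e$ derived above makes precisely such a splitting natural: the diagonal factor is $D_e^{1/2}$ and the orthogonal factor $V^T$ comes from spectrally decomposing $D_e^{1/2}\mathcal{L}(f_e)D_e^{1/2}$. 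Without the $\tilde f$-independence of $\ker\mathcal{L}$, the extra term $-[\mathcal{L}_{\tilde f}(f_e)\delta]\eta_{\tilde f}(f_e)$ would spoil the factorization and there would be no reason to expect a diagonal symmetrizer to exist.
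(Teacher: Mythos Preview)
Your proof is correct and follows essentially the same route as the paper: both derive $Q_{\tilde f}(f_e)=-\mathcal{L}(f_e)\eta_{\tilde f\tilde f}(f_e)$ via the $\tilde f$-independence of $\ker\mathcal{L}$, take $\Lambda_0=\eta_{\tilde f\tilde f}(f_e)$, spectrally decompose $\Lambda_0^{1/2}\mathcal{L}(f_e)\Lambda_0^{1/2}$ with an orthogonal matrix, and set $P$ equal to that orthogonal matrix times $\Lambda_0^{1/2}$. The only cosmetic difference is that the paper subtracts $\eta_{\tilde f}(f_e)$ inside $Q(\tilde f)$ before differentiating (so the unwanted term vanishes because the bracket is zero at $f_e$), whereas you apply the product rule first and then differentiate the identity $\mathcal{L}(\tilde f)\eta_{\tilde f}(f_e)\equiv 0$ to kill that term; these are the same computation organized slightly differently.
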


\begin{proof}
	
	By the definition of the steady state $f_e \in G,$ we have $Q(f_e)=-\mathcal{L}(f_e)\eta_{\tilde{f}}(f_e)=0,$ meaning that $\eta_{\tilde{f}}(f_e)$ is in the null space of $\mathcal{L}(f_e)$. From the aforementioned $\tilde{f}$-independence of the null space of $\mathcal{L}(\tilde{f})$, it follows that $\eta_{\tilde{f}}(f_e)$ is also in the null space of $\mathcal{L}(\tilde{f})$ for any $\tilde{f} \in G.$ Thus we conclude that 
	\begin{equation*} 
		\mathcal{L}(\tilde{f})\eta_{\tilde{f}}(f_e)=0, \qquad \forall \ \tilde{f} \in G,
	\end{equation*}
	and thereby
	\begin{equation*} 
		Q(\tilde{f})=-\mathcal{L}(\tilde{f})\eta_{\tilde{f}}(\tilde{f})=-\mathcal{L}(\tilde{f})[\eta_{\tilde{f}}(\tilde{f})-\eta_{\tilde{f}}(f_e)].
	\end{equation*}
	Differentiating this equality at $\tilde{f}=f_e,$ we obtain
	\begin{equation*} 
		Q_{\tilde{f}}(f_e)=-\mathcal{L}(f_e)\eta_{\tilde{f}\tilde{f}}(f_e).
	\end{equation*}
	Note that the Hessian matrix $\eta_{\tilde{f}\tilde{f}}(\tilde{f})$ is a diagonal and positive definite matrix:
	$$
	\eta_{\tilde{f}\tilde{f}}(\tilde{f})=\diag\left(\frac{1}{\tilde{f}_1},\cdots,\frac{1}{\tilde{f}_n}\right).
	$$
	
	Take 
	\begin{equation} \label{2.5}
		\Lambda_0=\eta_{\tilde{f}\tilde{f}}(f_e)=\diag\left(\frac{1}{f_1^e},\cdots,\frac{1}{f_n^e}\right)
	\end{equation}
	with $f_i^e$ the $i$-th component of $f_e$.
	Thanks to the symmetry of $\Lambda_0^{1/2}\mathcal{L}(f_e)\Lambda_0^{1/2}$, there exists an orthogonal matrix $H$ such that
	$$
	\Lambda_0^{1/2}\mathcal{L}(f_e)\Lambda_0^{1/2}=H^T\begin{pmatrix}
		0 & 0 \\
		0 & \Lambda
	\end{pmatrix}  H,
	$$
	with $\Lambda \in \mathbb{R}^{r \times r}$ a diagonal matrix with positive entries. Then we take $P=H\Lambda_0^{1/2}$ and deduce that
	\begin{equation*}
		\begin{aligned}
			&PQ_{\tilde{f}}(f_e)P^{-1}\\=&-P\mathcal{L}(f_e)\Lambda_0P^{-1}\\=&-H\Lambda_0^{1/2}\Lambda_0^{-1/2}H^{T}\begin{pmatrix}
				0 & 0 \\
				0 & \Lambda
			\end{pmatrix}H\Lambda_0^{-1/2} \Lambda_0 \Lambda_0^{-1/2}H^T\\=&-\begin{pmatrix}
				0 & 0 \\
				0 & \Lambda
			\end{pmatrix},
		\end{aligned}
	\end{equation*}
	and
	\begin{equation*}
		\begin{aligned}
			\Lambda_0Q_{\tilde{f}}(f_e)=-\Lambda_0\mathcal{L}(f_e)\Lambda_0&=-\Lambda_0^{1/2}H^T\begin{pmatrix}
				0 & 0 \\
				0 & \Lambda
			\end{pmatrix}H\Lambda_0^{1/2}\\&=-P^T\begin{pmatrix}
				0 & 0 \\
				0 & \Lambda
			\end{pmatrix}P.
		\end{aligned}
	\end{equation*}
	This completes the proof.
\end{proof}

Now we linearize the discrete velocity model (\ref{2.2}) around the steady state $f_e$ to obtain
\begin{equation} \label{2.6}
	f_t+\sum_{j=1}^d \Lambda_j    f_{x_j}=Q_{\tilde{f}}(f_e)f,
\end{equation}
with $f=\tilde{f}-f_e$ the deviation from the steady state.
Here $\Lambda_j$ is the diagonal constant matrix $\diag\left(u_{1j},u_{2j},\cdots,u_{nj}\right)$ and the matrix $Q_{\tilde{f}}(f_e)$ enjoys the properties stated in Lemma \ref{L2.1}. 

To solve the first-order hyperbolic system (\ref{2.6}) in the bounded domain $\Omega \subset \mathbb{R}^d$, we need to specify proper initial and boundary conditions. With these conditions, the existence and uniqueness of solutions to system (\ref{2.6}) can be established (see Appendix). 
Denote by $\mathbf{n}(x)=(n_1(x),\cdots,n_d(x))$  the unit outward normal vector at the boundary point $x=(x_1,\cdots,x_d) \in \partial \Omega.$ 

The components of $f$ corresponding to the negative (resp. positive) entries of the diagonal matrix
\begin{equation} \label{2.7}
	\sum_{j=1}^d n_j(x)\Lambda_j
\end{equation}
are referred to as incoming (resp. outgoing) variables at the boundary point. 
According to the classic theory \cite{Serre2006,higdon1986initial,Majda1975InitialboundaryVP,Russell1978}, the proper boundary condition specifies an algebraic relation to determine the incoming variables in terms of the outgoing variables.

We conclude this section with the following technical assumption:
\begin{assumption} \label{A2.1}
	The discrete velocity set $S$ does not contain the origin in $\mathbb{R}^d.$ 
\end{assumption}
\noindent This assumption ensures that the diagonal matrix $\sum_{j=1}^d \Lambda_j^2$ is positive definite. Physically, it means that the gas system under consideration does not contain static particles.

\begin{remark}
	In the one-dimensional case, i.e. $d=1,$ the last assumption is equivalent to the fact that the boundary is non-characteristic, that is, the matrix (\ref{2.7}) is invertible for each $x \in \partial \Omega$. For this kind of problems, many excellent works have been done, and in particular, the boundary control can be achieved with the backstepping method with arbitrary source terms \cite{Hu2016}. Otherwise, if the boundary is characteristic, the controllability requires that some restrictions on the source terms should be imposed, see \cite{de2022backstepping,yang2023feedback,YONG2019252}.
\end{remark}

\section{Main result} \label{section3}

Our goal is to construct boundary conditions so that the resultant initial-boundary-value problem has a unique $L^2$-solution $f=f(t,x)$ decaying in time exponentially. For this purpose, we follow \cite{bastin2016stability,HERTY201612} and introduce the following functional
\begin{equation*} 
	\begin{aligned}
		L(t)= &\alpha \int_{\Omega}  f^T(t,x) \Lambda_0 f(t,x) dx\\&+\int_{\Omega}  f^T(t,x)\exp\left(-\sum_{j=1}^d \Lambda_jx_j\right) f(t,x) dx.
	\end{aligned}
\end{equation*}
Here $\alpha$ is a positive constant to be determined, $\Lambda_0$ is the diagonal matrix defined in (\ref{2.5}), and $\exp(-\sum_{j=1}^d \Lambda_jx_j)$ is a diagonal matrix with entries $\exp(-\sum_{j=1}^d u_{ij}x_j)$ for $i=1,\cdots,n.$ With the bounded domain $\Omega$, this functional $L(t)$ is obviously equivalent to the square of the usual $L^2$-norm of $f$, namely
\begin{equation} \label{3.1}
	\lambda_{\min} \| f(t,\cdot)\|^2  \leq L(t) \leq \lambda_{\max} \| f(t,\cdot)\|^2.
\end{equation}
Here $\lambda_{\min}$ (resp.\ $\lambda_{\max}$) are the smallest (resp.\ biggest) eigenvalue of the symmetric positive definite matrix 
$$
\alpha \Lambda_0+\exp\left(-\sum_{j=1}^d \Lambda_jx_j\right)
$$ 
over $x \in \bar{\Omega}.$


Our main result is 
\begin{theorem} \label{T3.1}
	Under Assumption \ref{A2.1}, there exist proper boundary conditions such that the corresponding solutions $f=f(t,x) \in C([0,\infty);L^2(\Omega))$ to system (\ref{2.6}) with initial data $f_0=f_0(x)\in L^2(\Omega)$ are exponentially stable. Namely, there exist positive constants $\nu$ and $C$ such that, for all $f_0 \in L^2(\Omega)$,
	\begin{equation*}
		\| f(t,\cdot) \| \leq C\exp(-\nu t) \| f_0 \|. 
	\end{equation*} \\
\end{theorem}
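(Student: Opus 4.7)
The overall plan is to follow the Lyapunov approach of \cite{bastin2016stability,HERTY201612}: differentiate $L(t)$ along smooth solutions of (\ref{2.6}), use integration by parts to split $dL/dt$ into an interior part and a boundary part, and then choose the free constant $\alpha$ together with the boundary conditions so that $\frac{dL}{dt}\leq -\nu L(t)$ for some $\nu > 0$. Exponential decay of $\|f(t,\cdot)\|$ will then follow from Gr\"onwall's lemma and the equivalence (\ref{3.1}), and a density argument will extend the estimate to $L^2$ initial data using the well-posedness recorded in the appendix.

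First I would write $dL/dt$ explicitly. Since both $\Lambda_0$ and the weight $M(x) := \exp(-\sum_{j=1}^d \Lambda_j x_j)$ are diagonal and satisfy $\partial_{x_j} M = -\Lambda_j M$, a direct integration by parts yields
\begin{equation*}
\begin{aligned}
\frac{dL}{dt} =&\; - \int_{\partial\Omega} f^T (\alpha \Lambda_0 + M)\Big(\sum_{j=1}^d n_j \Lambda_j\Big) f\, d\sigma \\
& - \int_\Omega f^T M \Big(\sum_{j=1}^d \Lambda_j^2\Big) f\, dx\\
& + 2\alpha \int_\Omega f^T \Lambda_0 Q_{\tilde{f}}(f_e) f\, dx\\
& + 2\int_\Omega f^T M Q_{\tilde{f}}(f_e) f\, dx.
\end{aligned}
\end{equation*}
The ``extra'' interior term $-\int_\Omega f^T M \sum_j \Lambda_j^2 f\, dx$, coming from the $x$-dependence of $M$, is precisely the reason for introducing the exponential weight: by Assumption \ref{A2.1} the diagonal matrix $\sum_j \Lambda_j^2 = \diag(|u_1|^2,\dots,|u_n|^2)$ is strictly positive definite, yielding uniform dissipation in every component of $f$.

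Next I would estimate the three interior contributions after changing to the variables $g = Pf$ from Lemma \ref{L2.1}. Identity (\ref{2.4}) converts $2\alpha f^T \Lambda_0 Q_{\tilde{f}}(f_e) f$ into $-2\alpha g_{II}^T \Lambda g_{II}$, providing strong dissipation of the relaxing components $g_{II} \in \mathbb{R}^r$; the weighted term $-\int f^T M \sum_j \Lambda_j^2 f\, dx$ is bounded above by $-c_0 \int |g|^2\, dx$ because $P$ is invertible and $M$ is bounded below on $\bar{\Omega}$. For the cross term $2\int f^T M Q_{\tilde{f}}(f_e) f\, dx$, identity (\ref{2.3}) gives $Q_{\tilde{f}}(f_e) f = -P^{-1}\diag(0,\Lambda)g$, which depends only on $g_{II}$, so Cauchy--Schwarz and the boundedness of $M$ on $\bar\Omega$ yield a pointwise bound by $C|g||g_{II}|$, and Young's inequality then dominates the integral by $\epsilon \int|g|^2\, dx + C_\epsilon \int |g_{II}|^2\, dx$. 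Taking $\epsilon$ small and afterwards $\alpha$ large produces an interior estimate of the form $-\nu' \int_\Omega |g|^2\, dx \leq -\nu L(t)$. Balancing the anisotropic source dissipation (acting only on $g_{II}$) against the isotropic weighted dissipation (acting on all of $g$) through the single free parameter $\alpha$ is the main technical obstacle.

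It remains to make the boundary integral non-positive. Since $\alpha\Lambda_0 + M(x)$ and $\sum_j n_j(x)\Lambda_j$ are simultaneously diagonal, the boundary integrand equals $\sum_{k=1}^n \big[\alpha/f_k^e + \exp(-\sum_{j=1}^d u_{kj} x_j)\big] (u_k \cdot n(x)) f_k^2$, so outgoing components ($u_k\cdot n > 0$) contribute positively and incoming ones negatively. Any proper boundary condition expressing the incoming components as linear combinations of the outgoing ones with a sufficiently small coefficient matrix -- the simplest being the homogeneous Dirichlet condition $f_k = 0$ for every incoming index $k$ -- makes this sum pointwise non-negative, hence the boundary term in $dL/dt$ non-positive. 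Since $\alpha$ enters the boundary form only as a positive multiplier, enlarging $\alpha$ does not interfere with the boundary design, and combining the two estimates concludes the argument.
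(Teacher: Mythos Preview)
Your proposal is correct and follows essentially the same route as the paper: the paper organizes the computation into two lemmas (one for the $\Lambda_0$-weighted piece using (\ref{2.4}), one for the exponentially weighted piece using Assumption~\ref{A2.1} and (\ref{2.3}) together with Young's inequality), then combines them and chooses $\alpha$ large, exactly as you outline in a single pass with the $(g_I,g_{II})$ splitting. The boundary treatment---diagonalizing the integrand and noting that the zero-incoming condition makes it sign-definite---is identical to the paper's.
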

\vspace{-0.5cm}
The proof of this theorem relies on the following two lemmas, where 
$$
\begin{pmatrix}
	u \\ q
\end{pmatrix}:=Pf
$$
with the same partition as in Lemma \ref{L2.1}, that is, $u \in \mathbb{R}^{n-r},q \in \mathbb{R}^{r}.$

\begin{lemma}  \label{L3.1}
	The $L^2$-solutions to (\ref{2.6}) satisfy the following inequality
	\begin{equation*} 
		\frac{d}{dt}\int_{\Omega}  f^T \Lambda_0 f dx+\mathcal{BC}_1 \leq -2 \lambda \|q(t,\cdot)\|^2,
	\end{equation*}
	where
	$$
	\mathcal{BC}_1= \int_{\partial \Omega}  f^T\Lambda_0\left[\sum_{j=1}^d n_j(x)\Lambda_j\right]f   d\sigma
	$$ 
	with $\lambda$ the smallest eigenvalue of $\Lambda$ in Lemma \ref{L2.1}. 
\end{lemma}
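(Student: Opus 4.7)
The plan is to perform a standard energy estimate in the weighted inner product defined by the diagonal matrix $\Lambda_0$, then use the structural identity (\ref{2.4}) to convert the source-term contribution into a coercive dissipation on the $q$-component. First I would differentiate under the integral sign and use the linearized PDE (\ref{2.6}) to write
\begin{equation*}
\frac{d}{dt}\int_{\Omega} f^T \Lambda_0 f \, dx = 2 \int_{\Omega} f^T \Lambda_0 f_t \, dx = -2\sum_{j=1}^d \int_{\Omega} f^T \Lambda_0 \Lambda_j f_{x_j}\, dx + 2\int_{\Omega} f^T \Lambda_0 Q_{\tilde{f}}(f_e) f \, dx.
\end{equation*}

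For the transport terms, I would exploit that $\Lambda_0$ and $\Lambda_j$ are constant and diagonal, so they commute and $\Lambda_0 \Lambda_j$ is symmetric. Hence $2 f^T \Lambda_0 \Lambda_j f_{x_j} = \partial_{x_j}(f^T \Lambda_0 \Lambda_j f)$, and the divergence theorem gives
\begin{equation*}
-2\sum_{j=1}^d \int_{\Omega} f^T \Lambda_0 \Lambda_j f_{x_j}\, dx = -\int_{\partial \Omega} f^T \Lambda_0 \left[\sum_{j=1}^d n_j(x)\Lambda_j\right] f \, d\sigma = -\mathcal{BC}_1.
\end{equation*}

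For the source term, I would invoke the identity (\ref{2.4}) from Lemma \ref{L2.1}: $\Lambda_0 Q_{\tilde{f}}(f_e) = -P^T \mathrm{diag}(0,\Lambda) P$. Setting $(u^T,q^T)^T = Pf$ as in the lemma statement, the integrand becomes
\begin{equation*}
2 f^T \Lambda_0 Q_{\tilde{f}}(f_e) f = -2 (Pf)^T \begin{pmatrix} 0 & 0 \\ 0 & \Lambda \end{pmatrix} (Pf) = -2 q^T \Lambda q \leq -2\lambda |q|^2,
\end{equation*}
where $\lambda>0$ is the smallest eigenvalue of the symmetric positive definite matrix $\Lambda$. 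Integrating this pointwise bound over $\Omega$ and combining with the transport computation yields the claimed inequality.

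I do not expect a real obstacle here: the only subtlety is verifying that $\Lambda_0 \Lambda_j$ is symmetric so that integration by parts produces exactly the claimed boundary integrand; this is immediate from diagonality. The content of the lemma is really that the structural stability identity (\ref{2.4}) of Lemma \ref{L2.1} promotes the merely negative-semidefinite source $\Lambda_0 Q_{\tilde{f}}(f_e)$ into a strictly negative quadratic form on the $q$-block, which is the dissipation needed downstream to absorb non-dissipative contributions from the added weighted Lyapunov term.
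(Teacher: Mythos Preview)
Your proof is correct and follows essentially the same route as the paper's: multiply (\ref{2.6}) by $f^T\Lambda_0$, use diagonality (hence symmetry) of $\Lambda_0\Lambda_j$ to put the transport terms in divergence form, invoke (\ref{2.4}) to identify the source contribution as $-2q^T\Lambda q$, and integrate using the divergence theorem. The only cosmetic difference is that the paper writes the source term in the explicitly symmetrized form $f^T(\Lambda_0 Q_{\tilde f}(f_e)+Q_{\tilde f}^T(f_e)\Lambda_0)f$ before applying (\ref{2.4}), whereas you use directly that $\Lambda_0 Q_{\tilde f}(f_e)$ is symmetric by (\ref{2.4}); these are of course equivalent.
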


\begin{proof}
	Referring to Lemma \ref{L2.1}, we multiply the system (\ref{2.6}) with $f^T\Lambda_0$ from the left to obtain 
	\begin{equation*}
		\begin{aligned}
			( f^T\Lambda_0f)_t+\sum_{j=1}^d (f^T\Lambda_0\Lambda_jf)_{x_j}&= f^T(\Lambda_0Q_{\tilde{f}}(f_e)+Q_{\tilde{f}}^T(f_e)\Lambda_0)f \\
			&=-2(Pf)^T\begin{pmatrix}
				0 & 0 \\
				0 & \Lambda
			\end{pmatrix}Pf \\
			&=-2 q^T\Lambda q \leq -2\lambda q^Tq.
		\end{aligned}
	\end{equation*}
	Integrating this inequality over $\Omega$ and using the divergence theorem gives the lemma.
\end{proof}

\begin{lemma} \label{L3.2}
	Under Assumption \ref{A2.1}, there exist two positive constants $C_1,C_2$ such that
	\begin{equation*} 
		\begin{aligned}
			&\frac{d}{dt} \int_{\Omega}  f^T\exp\left(-\sum_{j=1}^d \Lambda_jx_j\right) f dx+\mathcal{BC}_2 \\\leq& -\frac{C_1}{2}\| u(t,\cdot)\|^2+(C_2-C_1)\| q(t,\cdot)\|^2,
		\end{aligned}
	\end{equation*}
	where
	\begin{equation*}
		\mathcal{BC}_2=\int_{\partial \Omega} f^T\exp\left(-\sum_{j=1}^d \Lambda_jx_j\right)\left[\sum_{j=1}^d n_j(x)\Lambda_j\right]f  d\sigma.
	\end{equation*}
\end{lemma}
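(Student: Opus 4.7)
The plan is to mimic the proof of Lemma \ref{L3.1}, but with the constant weight $\Lambda_0$ replaced by the space-dependent diagonal weight $E(x) := \exp\bigl(-\sum_{j=1}^d \Lambda_j x_j\bigr)$. The algebraic fact driving this choice is $\partial_{x_j} E = -\Lambda_j E = -E\Lambda_j$ (all diagonal matrices commute), so that integration by parts converts the transport part into exactly $\mathcal{BC}_2$ plus an extra sign-definite bulk term.

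Step 1 (time derivative + integration by parts). First I would multiply (\ref{2.6}) by $2f^T E$ from the left and integrate over $\Omega$. The product rule together with $(\partial_{x_j} E)\Lambda_j = -E\Lambda_j^2$ and the symmetry of $E\Lambda_j$ yields
\begin{equation*}
\partial_{x_j}(f^T E \Lambda_j f) \;=\; 2 f^T E \Lambda_j f_{x_j} \;-\; f^T E \Lambda_j^2 f.
\end{equation*}
Summing in $j$ and applying the divergence theorem gives
\begin{equation*}
\frac{d}{dt}\int_\Omega f^T E f\, dx + \mathcal{BC}_2 \;=\; -\int_\Omega f^T E \sum_{j=1}^d \Lambda_j^2 f\, dx + 2\int_\Omega f^T E Q_{\tilde{f}}(f_e) f\, dx.
\end{equation*}

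Step 2 (dissipation from the transport structure). By Assumption \ref{A2.1} each velocity $u_i \in S$ satisfies $\sum_{j=1}^d u_{ij}^2 > 0$, so the diagonal matrix $\sum_{j=1}^d \Lambda_j^2$ has strictly positive diagonal entries; combined with the continuity and positivity of $E(x)$ on the compact set $\bar{\Omega}$, the matrix $E(x)\sum_j \Lambda_j^2$ is uniformly positive definite on $\bar{\Omega}$. Since $P^{-1}$ is invertible, the quadratic form $f^T E \sum_j \Lambda_j^2 f$ expressed in the $(u,q)$-variables is bounded below pointwise by $C_1(|u|^2 + |q|^2)$ for some $C_1 > 0$, giving
\begin{equation*}
-\int_\Omega f^T E \sum_{j=1}^d \Lambda_j^2 f\, dx \;\le\; -C_1\bigl(\|u(t,\cdot)\|^2 + \|q(t,\cdot)\|^2\bigr).
\end{equation*}

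Step 3 (source term via Lemma \ref{L2.1} and Young). By (\ref{2.3}), $Q_{\tilde{f}}(f_e)f = -P^{-1}(0,\Lambda q)^T$, so only the $q$-component drives the source. Writing $M(x) := (P^{-1})^T E(x) P^{-1}$ in its $(n-r,r)$-block form $(M_{ij})$, I would obtain
\begin{equation*}
2 f^T E Q_{\tilde{f}}(f_e) f \;=\; -2 u^T M_{12}(x)\Lambda q \;-\; 2 q^T M_{22}(x)\Lambda q.
\end{equation*}
The blocks are continuous and hence uniformly bounded on $\bar{\Omega}$; a Young inequality with parameter tuned to absorb the cross term into exactly half of the $\|u\|^2$ dissipation from Step 2 bounds the right-hand side by $\tfrac{C_1}{2}\|u\|^2 + C_2\|q\|^2$ for some $C_2 > 0$. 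Collecting Steps 1--3 yields the stated inequality.

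The main obstacle is that the source-term dissipation supplied by Lemma \ref{L2.1} alone only controls the $q$-direction, so it cannot close the estimate on the full vector $f$. The weighted Lyapunov functional is designed precisely so that the integration by parts donates the extra bulk term $-\int_\Omega f^T E \sum_j \Lambda_j^2 f\, dx$, and Assumption \ref{A2.1} is exactly what makes this bulk term coercive in both $u$ and $q$. The delicate numerical point is balancing the newly gained $u$-dissipation against the Young-inequality leak from the $u$-$q$ cross term in Step 3.
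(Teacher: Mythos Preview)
Your proposal is correct and follows essentially the same route as the paper: multiply by the diagonal weight $E(x)=\exp(-\sum_j\Lambda_jx_j)$, use $\partial_{x_j}E=-E\Lambda_j$ to extract the extra coercive bulk term $-f^TE\sum_j\Lambda_j^2 f$ (positive definite by Assumption~\ref{A2.1}), pass to $(u,q)$-variables via $P$ to get the constant $C_1$, and then handle the source term through the block structure of $P^{-T}EP^{-1}$ with a Young inequality tuned to absorb the $u$--$q$ cross term into $\tfrac{C_1}{2}\|u\|^2$. The only cosmetic difference is that the paper symmetrizes the source contribution as $f^T(EQ_{\tilde f}(f_e)+Q_{\tilde f}^T(f_e)E)f$ before passing to blocks, whereas you compute $2f^TEQ_{\tilde f}(f_e)f$ directly; these are identical since the expression is a scalar.
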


\begin{proof}
	Multiplying the system (\ref{2.6}) by $f^T\exp(-\sum_{j=1}^d \Lambda_jx_j)$ from the left, we obtain 
	\begin{equation} \label{3.2} 
		\begin{aligned} 
			&\left(f^T\exp\left(-\sum_{j=1}^d \Lambda_jx_j\right)f\right)_t\\&+\sum_{i=1}^d \left(f^T\exp\left(-\sum_{j=1}^d \Lambda_jx_j\right)\Lambda_i f\right)_{x_i} \\[4mm] 
			=&-f^T\left(\sum_{i=1}^d\Lambda_i^2\right)\exp\left(-\sum_{j=1}^d \Lambda_jx_j\right) f \\[4mm]&+f^T \Bigg[\exp\left(-\sum_{j=1}^d \Lambda_jx_j\right)Q_{\tilde{f}}(f_e)\\[4mm]&+Q^T_{\tilde{f}}(f_e)\exp\left(-\sum_{j=1}^d \Lambda_jx_j\right)\Bigg]f.
		\end{aligned}
	\end{equation}
	Under Assumption \ref{A2.1}, the diagonal matrix 
	$$
	\left(\sum_{i=1}^d\Lambda_i^2\right)\exp\left(-\sum_{j=1}^d \Lambda_jx_j\right)
	$$
	is positive definite. Therefore, the first term in the right-hand side of (\ref{3.2}) can be estimated as
	\begin{equation*}
		\begin{aligned}
			&f^T\left(\sum_{i=1}^d\Lambda_i^2\right)\exp\left(-\sum_{j=1}^d \Lambda_jx_j\right) f\\[4mm]=& (Pf)^TP^{-T}\left(\sum_{i=1}^d\Lambda_i^2\right)\exp\left(-\sum_{j=1}^d \Lambda_jx_j\right)P^{-1} (Pf) \\[4mm]
			=&\begin{pmatrix}
				u \\
				q
			\end{pmatrix}^TP^{-T}\left(\sum_{i=1}^d\Lambda_i^2\right)\exp\left(-\sum_{j=1}^d \Lambda_jx_j\right)P^{-1}\begin{pmatrix}
				u \\
				q
			\end{pmatrix}\\[4mm]
			\geq& C_1u^Tu+C_1q^Tq,
		\end{aligned}
	\end{equation*}
	where $C_1$ is the smallest eigenvalue of the symmetric positive definite matrix 
	$$
	P^{-T}\left(\sum_{i=1}^d\Lambda_i^2\right)\exp\left(-\sum_{j=1}^d \Lambda_jx_j\right)P^{-1}
	$$ over $x \in \bar{\Omega}.$

	For the second term, we set 
	$$
	\mu(x):=P^{-T}\exp\left(-\sum_{j=1}^d \Lambda_jx_j\right)P^{-1}=\begin{pmatrix}
		\mu_{11}(x) & \mu_{12}(x) \\
		\mu_{21}(x) & \mu_{22}(x)
	\end{pmatrix},
	$$
	where the submatrices $\mu_{11}(x) \in \mathbb{R}^{(n-r)\times (n-r)}$ and $\mu_{22}(x)\in \mathbb{R}^{r \times r}.$ By (\ref{2.3}), the second term can be estimated as
	\begin{equation*}
		\begin{aligned}
			&f^T \left[P^T\mu(x)PQ_{\tilde{f}}(f_e)+Q^T_{\tilde{f}}(f_e)P^{T}\mu(x)P\right]f \\[4mm]
			=& (Pf)^T\left[\mu(x)PQ_{\tilde{f}}(f_e)P^{-1}+P^{-T}Q^T_{\tilde{f}}(f_e)P^{T}\mu(x)\right](Pf)\\[4mm]
			=&-\begin{pmatrix}
				u \\ q
			\end{pmatrix}^T \begin{pmatrix}
				0 & \mu_{12}(x)\Lambda \\
				\Lambda\mu_{21}(x) & \mu_{22}(x)\Lambda+\Lambda\mu_{22}(x)
			\end{pmatrix}\begin{pmatrix}
				u \\ q
			\end{pmatrix}\\[4mm]
			\leq& \frac{C_1}{2}u^Tu+C_2q^Tq.
		\end{aligned}
	\end{equation*}
	Here Young's inequality has been used and the positive constant $C_2$ depends on  $C_1.$ 
	
	Combining the two estimates above, we arrive at
	\begin{equation*}
		\begin{aligned}
			&\left(f^T\exp\left(-\sum_{j=1}^d \Lambda_jx_j\right)f\right)_t\\&+\sum_{i=1}^d \left(f^T\exp\left(-\sum_{j=1}^d \Lambda_jx_j\right)\Lambda_i f\right)_{x_i} \\[3mm]
			\leq& -\frac{C_1}{2}u^Tu+(C_2-C_1)q^Tq.
		\end{aligned}
	\end{equation*}
	Integrating this inequality over $\Omega$ and using the divergence theorem gives the lemma.
	
\end{proof} 

\begin{remark}
	The proofs of Lemma \ref{L3.1} and \ref{L3.2} rely on the differentiability of the solutions respect to time and spatial variables. By following the standard approximation argument in \cite[Section 2.1.3]{bastin2016stability}, the deduction can be showed valid for $L^2$-solutions.
\end{remark}

\begin{proof} [\textbf{Proof of Theorem \ref{T3.1}}]
	From Lemma \ref{L3.1} and \ref{L3.2}, it follows directly that
	\begin{equation*}
		\begin{aligned}
			&\frac{d}{dt}L(t)+\alpha \mathcal{BC}_1+\mathcal{BC}_2\\ \leq& -\frac{C_1}{2}\| u(t,\cdot)\|^2+(-2\lambda \alpha +C_2-C_1)\| q(t,\cdot)\|^2.		
		\end{aligned}	
	\end{equation*}
	Choose $\alpha$ large enough so that $(-2\lambda \alpha +C_2-C_1)$ is negative. Then we deduce from $f=P^{-1}\begin{pmatrix}
		u \\
		q
	\end{pmatrix}$ and (\ref{3.1}) that
	\vspace{-0.5cm}
	\begin{equation*}
		\begin{aligned}
			&\frac{d}{dt}L(t)+\alpha \mathcal{BC}_1+\mathcal{BC}_2 \\ \leq -&\min(2\lambda \alpha -C_2+C_1,\frac{C_1}{2})\left[\| u(t,\cdot)\|^2+\| q(t,\cdot)\|^2\right] \\ \leq -&\tilde{C} \| f(t,\cdot)\|^2 \\\leq-&\frac{\tilde{C}}{\lambda_{\max}}L(t).
		\end{aligned}
	\end{equation*}
	Here $\tilde{C}$ is a positive constant. Thus Theorem \ref{T3.1} follows provided that 
	\begin{equation} \label{3.3}
		\begin{aligned}
			\mathcal{BC}:=&\alpha \mathcal{BC}_1+\mathcal{BC}_2\\=&\int_{\partial \Omega} f^T\left[\alpha \Lambda_0+\exp\left(-\sum_{j=1}^d \Lambda_jx_j\right) \right]\\&\left(\sum_{j=1}^dn_j(x) \Lambda_j\right) f d\sigma \geq 0.
		\end{aligned}
	\end{equation}
	Note that the matrix in the brackets is diagonal and positive definite. Thus the integrand corresponding to the incoming variables is non-positive while that corresponding to the outgoing variables is non-negative. Hence, the inequality (\ref{3.3}) holds true at least in the trivial case that the incoming variables are set to be zero. 
\end{proof}

To have a general rule for control designs, we refer to the inequality (\ref{3.3}) and divide the boundary $\partial \Omega$ into several parts. Notice that 
$$
\sum_{j=1}^dn_j(x) \Lambda_j=\diag\left(\sum_{j=1}^dn_j(x) u_{1j},\cdots,\sum_{j=1}^dn_j(x) u_{nj}\right).
$$
For $i=1,\cdots,n$, we set 
\begin{equation*}
	\begin{aligned}
		\Gamma_i^+&=\{x \in \partial \Omega:\sum_{j=1}^d n_j(x)u_{ij}>0\},\\
		\Gamma_i^-&=\{x \in \partial \Omega:\sum_{j=1}^d n_j(x)u_{ij}<0\},\\
		\Gamma_i^0&=\{x \in \partial \Omega:\sum_{j=1}^d n_j(x)u_{ij}=0\}.
	\end{aligned}
\end{equation*}
Clearly we have 
$$
\partial \Omega=\Gamma_i^+ \cup \Gamma_i^- \cup \Gamma_i^0.
$$ 
Note that the $i$-th component $f_i$ of $f$ is the incoming (outgoing) variable on $\Gamma_i^-$ ($\Gamma_i^+$).
Thus we recall $\Lambda_0=\diag(\frac{1}{f_1^e},\cdots,\frac{1}{f_n^e})$ and rewrite the left-hand side of the inequality (\ref{3.3}) as
\begin{equation*} 
	\begin{aligned}
		&\sum_{i=1}^n \int_{\Gamma_i^+} (\frac{\alpha}{f_i^e}+\exp(-\sum_{j=1}^d u_{ij}x_j))(\sum_{j=1}^d n_j(x)u_{ij})f_i^2 d\sigma\\+&\sum_{i=1}^n \int_{\Gamma_i^-} (\frac{\alpha}{f_i^e}+\exp(-\sum_{j=1}^d u_{ij}x_j))(\sum_{j=1}^d n_j(x)u_{ij})f_i^2 d\sigma.
	\end{aligned}
\end{equation*}
Hence we have the following rule. 
\begin{corollary} \label{C1}
	The exponential decay estimate in Theorem \ref{T3.1} holds if
	\begin{equation*}
		\begin{aligned}
			&\sum_{i=1}^n \int_{\Gamma_i^+} (\frac{\alpha}{f_i^e}+\exp(-\sum_{j=1}^d u_{ij}x_j))(\sum_{j=1}^d n_j(x)u_{ij})f_i^2d\sigma \\
			\geq&-\sum_{i=1}^n \int_{\Gamma_i^-} (\frac{\alpha}{f_i^e}+\exp(-\sum_{j=1}^d u_{ij}x_j))(\sum_{j=1}^d n_j(x)u_{ij})f_i^2 d\sigma.
		\end{aligned}
	\end{equation*}
\end{corollary}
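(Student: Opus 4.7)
The plan is to recognize that this corollary is essentially a rewriting of the sufficient condition (\ref{3.3}) that already appeared at the end of the proof of Theorem \ref{T3.1}. All I need to do is express $\mathcal{BC}$ component-wise, using the diagonal structure of the three matrices $\Lambda_0$, $\exp(-\sum_j \Lambda_j x_j)$, and $\sum_j n_j(x)\Lambda_j$, and then check that the two integrals appearing in the corollary are exactly the positive and negative contributions to $\mathcal{BC}$.

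Concretely, I would first expand the integrand in (\ref{3.3}). Since $\alpha \Lambda_0 + \exp(-\sum_j \Lambda_j x_j)$ is diagonal with $i$-th entry $\alpha/f_i^e + \exp(-\sum_j u_{ij} x_j)$, and $\sum_j n_j(x)\Lambda_j$ is diagonal with $i$-th entry $\sum_j n_j(x) u_{ij}$, the quadratic form $f^T[\cdot](\cdot) f$ decouples into a sum over $i=1,\dots,n$ of scalar terms of the form
\begin{equation*}
\left(\frac{\alpha}{f_i^e}+\exp\left(-\sum_{j=1}^d u_{ij} x_j\right)\right)\left(\sum_{j=1}^d n_j(x) u_{ij}\right) f_i^2.
\end{equation*}
Integrating over $\partial\Omega$ and splitting $\partial\Omega = \Gamma_i^+ \cup \Gamma_i^- \cup \Gamma_i^0$ for each $i$, the $\Gamma_i^0$ contribution vanishes because $\sum_j n_j(x) u_{ij} = 0$ there. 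What remains is precisely the sum of the two integrals written in the corollary, so the sufficient condition (\ref{3.3}) from the proof of Theorem \ref{T3.1} becomes
\begin{equation*}
\sum_{i=1}^n \int_{\Gamma_i^+}[\cdots]\, d\sigma + \sum_{i=1}^n \int_{\Gamma_i^-}[\cdots]\, d\sigma \geq 0,
\end{equation*}
which, after moving the $\Gamma_i^-$ terms to the right-hand side, is exactly the inequality assumed in Corollary \ref{C1}.

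With (\ref{3.3}) satisfied, Theorem \ref{T3.1} applies verbatim and yields the exponential decay estimate. There is essentially no obstacle here; the only care needed is bookkeeping the signs, noting that the bracketed factor $\alpha/f_i^e + \exp(-\sum_j u_{ij} x_j)$ is strictly positive everywhere on $\bar\Omega$, so the terms over $\Gamma_i^+$ are non-negative and those over $\Gamma_i^-$ are non-positive, which makes the rearrangement transparent. This also clarifies why the trivial choice of zero incoming variables (already mentioned at the end of the proof of Theorem \ref{T3.1}) is a special case of the corollary's condition, with the right-hand side collapsing to zero.
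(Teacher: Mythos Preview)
Your proposal is correct and follows essentially the same approach as the paper: the paper also expands the integrand in (\ref{3.3}) component-wise using the diagonal structure of $\Lambda_0$, $\exp(-\sum_j \Lambda_j x_j)$, and $\sum_j n_j(x)\Lambda_j$, splits $\partial\Omega$ into $\Gamma_i^+\cup\Gamma_i^-\cup\Gamma_i^0$, drops the vanishing $\Gamma_i^0$ contribution, and presents the corollary as the rearranged form of (\ref{3.3}). Your write-up matches this derivation in both method and level of detail.
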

\begin{remark}
	Corollary 2 presents a general rule to design local or non-local boundary conditions for the purpose of boundary stabilization. As indicated in Section IV below, there are infinitely many suitable boundary conditions, which assign the incoming variables in terms of the outgoing variables, satisfying the rule. For multi-dimensional problems, it seems impossible to express general boundary conditions, especially non-local ones, in a compact  form. Specific examples can be found in the next section, where the control variables are explained.
\end{remark}

From the proofs above, we can see that the functional $L(t)$ can be chosen as
$$
\int_{\Omega}  f^T(t,x)\exp\left(-\sum_{j=1}^d \Lambda_jx_j\right) f(t,x) dx
$$
when $r=0.$ On the other hand, it can be 
$$
\int_{\Omega}  f^T(t,x) \Lambda_0 f(t,x) dx
$$
for $r=n$.

\section{Applications to the 2-D coplanar model}
In the previous section, we have shown that the boundary stabilization can be achieved for discrete velocity models defined on a bounded domain and satisfying Assumptions \ref{A2.1}. Here, we present workable boundary conditions or control laws for the coplanar model \cite{Gatignol1975,platkowski1988discrete} in two dimensions.
We will see that the choices of proper boundary conditions are numerous in multi-dimensional cases, which is different from the spatially one-dimensional case where the boundary usually consists of two endpoints.

Consider the gas confined in a square container $\Omega=(0,1)\times (0,1)$ and postulate that the gas particles move with one of the four velocities of equal positive modulus $U$:
$$
u_1=(U,0),\ \ u_2=(-U,0),\ \ u_3=(0,U),\  \ u_4=(0,-U).
$$
Denote the number density functions by $\tilde{f}=(\tilde{f}_1,\tilde{f}_2,\tilde{f}_3,\tilde{f}_4)^T.$ Here $\tilde{f}_i=\tilde{f}_i(t,x,y)>0$  for $(t,x,y) \in [0,\infty) \times \bar{\Omega}$ is corresponding to the velocity $u_i$ ($i=1,\cdots,4$). The governing equation for each $\tilde{f}_k$ is
\begin{equation} \label{4.1}
	\begin{aligned}	\partial_t\tilde{f}_1+U\partial_x\tilde{f}_1&=\sigma(\tilde{f}_3\tilde{f}_4-\tilde{f}_1\tilde{f}_2),\\
		\partial_t\tilde{f}_2-U\partial_x\tilde{f}_2&=\sigma(\tilde{f}_3\tilde{f}_4-\tilde{f}_1\tilde{f}_2),\\
		\partial_t\tilde{f}_3+U\partial_y\tilde{f}_3&=-\sigma(\tilde{f}_3\tilde{f}_4-\tilde{f}_1\tilde{f}_2),\\
		\partial_t\tilde{f}_4-U\partial_y\tilde{f}_4&=-\sigma(\tilde{f}_3\tilde{f}_4-\tilde{f}_1\tilde{f}_2).
	\end{aligned}
\end{equation}
The above system is called coplanar model, which can be applied to the study of the stationary plane flow around a wedge \cite{cabannes1976etude}.
The symmetry (\ref{2.1}) is clear: $A_{12}^{34}=A_{21}^{34}=A_{34}^{12}=A_{34}^{21}=\sigma>0$ and  other coefficients are $0.$ Furthermore, Assumption \ref{A2.1} is fulfilled since $U>0.$ 

For a steady state $f_e=(f_{1}^e,f_{2}^e,f_{3}^e,f_{4}^e)^T$ with positive components, we denote the deviation $f=\tilde{f}-f_e.$ Then we linearize (\ref{4.1}) at the steady state as in Section \ref{section2}:
\begin{equation} \label{4.5}
	\begin{aligned}
		\partial_t f+\Lambda_1 \partial_x f+\Lambda_2 \partial_y  f= -\mathcal{L}(f_e)\Lambda_0f,
	\end{aligned}
\end{equation}
with $\Lambda_1=\diag(U,-U,0,0), \Lambda_2=\diag(0,0,U,-U), \Lambda_0=\diag(\frac{1}{f_{1}^e},\frac{1}{f_{2}^e},\frac{1}{f_{3}^e},\frac{1}{f_{4}^e}),$ and 
$$
\mathcal{L}(f_e)=\sigma\begin{pmatrix}
	f_{1}^ef_{2}^e   & f_{1}^ef_{2}^e  & -f_{3}^ef_{4}^e  & -f_{3}^ef_{4}^e \\  f_{1}^ef_{2}^e   & f_{1}^ef_{2}^e  & -f_{3}^ef_{4}^e  & -f_{3}^ef_{4}^e \\ -f_{1}^ef_{2}^e   & -f_{1}^ef_{2}^e  & f_{3}^ef_{4}^e  & f_{3}^ef_{4}^e\\ -f_{1}^ef_{2}^e   & -f_{1}^ef_{2}^e  & f_{3}^ef_{4}^e  & f_{3}^ef_{4}^e
\end{pmatrix}.
$$ 
Notice that the components of $f_e$ satisfying $f_{1}^ef_{2}^e=f_{3}^ef_{4}^e,$ thus $\mathcal{L}(f_e)$ is symmetric positive semi-definite with a three-dimensional null space ($r=1$). Furthermore, there exists a positive constant $\alpha$ so that the corresponding functional in Section \ref{section3} decays exponentially provided that the boundary term 
\begin{equation}  \label{4.2}
	\begin{aligned}
		\mathcal{BC}=&U\int_0^1 [-(\frac{\alpha}{f_3^e}+1)f_3^2+(\frac{\alpha}{f_4^e}+1)f_4^2]\big|_{y=0} dx\\[4mm] 
		&+U\int_0^1 [(\frac{\alpha}{f_3^e}+e^{-U})f_3^2-(\frac{\alpha}{f_4^e}+e^{U})f_4^2]\big|_{y=1} dx \\[4mm]
		&+U\int_0^1 [-(\frac{\alpha}{f_1^e}+1)f_1^2+(\frac{\alpha}{f_2^e}+1)f_2^2]\big|_{x=0} dy\\[4mm] &+U\int_0^1 [(\frac{\alpha}{f_1^e}+e^{-U})f_1^2-(\frac{\alpha}{f_2^e}+e^{U})f_2^2]\big|_{x=1} dy
	\end{aligned}
\end{equation}
is non-negative.
Note that the terms with the minus (resp. plus) sign correspond to the incoming (resp. outgoing) variables. 

To proceed, we refer to \cite{ZEISEL2000233} and make the following assumptions:
\begin{itemize}
	\item [(i)]  Gas valves are located on the boundary of the container ($\Omega=(0,1)\times(0,1)$) and can be used to regulate the density of gas inflows.
	\item [(ii)] Gas density monitors are located on the boundary of the container ($\Omega=(0,1)\times(0,1)$) and can be used to measure the  density of gas outflows.
\end{itemize}
According to the direction of the $i$-th mesoscopic velocity $u_i$ $(i=1,2,3,4),$ the number density $\tilde{f}_i=f_i+f_i^e$ corresponds to the gas particles moving inwards (the inflow) from the left, right, bottom and top edges of the container, respectively.

First, we consider the simplest boundary condition that we point out in the proof of Theorem \ref{T3.1}:
\begin{equation}
	\begin{aligned}
		f_1(t,0,y)&=0, \quad y \in (0,1), \\
		f_2(t,1,y)&=0, \quad y \in (0,1), \\
		f_3(t,x,0)&=0, \quad x \in (0,1), \\
		f_4(t,x,1)&=0, \quad x \in (0,1). 
	\end{aligned}
\end{equation}
These boundary conditions mean that the inflow $\tilde{f}_i$ is regulated with gas valves to be the constant level $f_i^e$.
In this case, the boundary term (\ref{4.2}) becomes 
\begin{equation*}
	\begin{aligned}
		\mathcal{BC}=&U\int_0^1 [(\frac{\alpha}{f_4^e}+1)f_4^2]\big|_{y=0} dx\\[4mm]& 
		+U\int_0^1 [(\frac{\alpha}{f_3^e}+e^{-U})f_3^2]\big|_{y=1} dx 
		\\[4mm]&+U\int_0^1 [(\frac{\alpha}{f_2^e}+1)f_2^2]\big|_{x=0} dy\\[4mm]& +U\int_0^1 [(\frac{\alpha}{f_1^e}+e^{-U})f_1^2]\big|_{x=1} dy \geq 0.
	\end{aligned}
\end{equation*}

Now we consider more complicated boundary conditions. Except for an interval $I=\{(x,y) \in (1/3,2/3) \times \{0\} \}$ of the bottom edge, we still use the zero boundary conditions 
\begin{equation}  \label{4.3}
	\begin{aligned}
		f_1(t,0,y)&=0, \quad y \in (0,1), \\
		f_2(t,1,y)&=0, \quad y \in (0,1), \\
		f_3(t,x,0)&=0, \quad x \in (0,\frac{1}{3}) \cup (\frac{2}{3},1), \\
		f_4(t,x,1)&=0, \quad x \in (0,1). 
	\end{aligned}
\end{equation}
For the interval, we consider the following two kinds of control laws.

The first one is to assign the value of  the inflow of this interval in terms of the on-line measurements of the outflow of the left edge:
\begin{align}
	\label{4.4a}    f_3(t,x,0)=k_1f_2(t,0,3x-1), \quad x\in (\frac{1}{3},\frac{2}{3}),\tag{16a}
\end{align}
with $k_1$ a constant to be chosen. Such a constant is our control variable.
According to the assumption (ii), such a boundary condition is realistic. In this case, the boundary term $\mathcal{BC}$ is non-negative provided that
\begin{equation*} 
	\begin{aligned}
		&-U\int_{1/3}^{2/3} (\frac{\alpha}{f_3^e}+1)f_3^2(t,x,0) dx\\[4mm]&+ U\int_0^1 (\frac{\alpha}{f_2^e}+1)f_2^2(t,0,y) dy \\[4mm]
		=&-\frac{U}{3}\int_{0}^{1} (\frac{\alpha}{f_3^e}+1)k_1^2f_2^2(t,0,y) dy\\[4mm] &+ U\int_0^1 (\frac{\alpha}{f_2^e}+1)f_2^2(t,0,y) dy \geq 0.
	\end{aligned}
\end{equation*}
This is true if 
$$
|k_1|\leq \sqrt{\frac{3f_3^e(\alpha+f_2^e)}{f_2^e(\alpha+f_3^e)}}.
$$

Besides (\ref{4.4a}), $f_3(t,x,0)$ can also be specified in the following form 
\begin{align}
	\label{4.4b}    f_3(t,x,0)=k_2f_2(t,0,3x-1)+k_3f_4(t,x,0), \quad x\in (\frac{1}{3},\frac{2}{3}), \tag{16b}
\end{align}
\setcounter{equation}{16}
with $k_2,k_3$ two constants.
This control law assigns the value of the inflow in terms of the on-line measurements of the outflows of the left edge and the interval itself. 

For (\ref{4.4b}), the boundary term $\mathcal{BC}$ is non-negative if 
\begin{equation*}
	\begin{aligned}
		&-\frac{2U}{3}\int_{0}^{1} (\frac{\alpha}{f_3^e}+1)k_2^2f_2^2(t,0,y) dy\\[4mm]&-2U\int_{1/3}^{2/3} (\frac{\alpha}{f_3^e}+1)k_3^2f_4^2(t,x,0) dx\\[4mm]&+U\int_{0}^{1} (\frac{\alpha}{f_4^e}+1)f_4^2(t,x,0) dx\\[4mm] &+ U\int_0^1 (\frac{\alpha}{f_2^e}+1)f_2^2(t,0,y) dy \geq 0.
	\end{aligned}
\end{equation*}
This is true if
\begin{equation*}
	\begin{aligned}
		|k_2|\leq \sqrt{\frac{3f_3^e(\alpha+f_2^e)}{2f_2^e(\alpha+f_3^e)}}, \qquad 
		|k_3| \leq \sqrt{\frac{f_3^e(\alpha+f_4^e)}{2f_4^e(\alpha+f_3^e)}}.
	\end{aligned}
\end{equation*}
In conclusion, both the control laws  (\ref{4.4a}) and (\ref{4.4b}) together with  (\ref{4.3}) ensure that the boundary term $\mathcal{BC}$ is non-negative. Consequently, boundary stabilization is achieved for the two-dimensional linearized coplanar model. Clearly, there are infinitely many feasible control laws and one can choose appropriate forms according to the practical situation.

Finally, we show the effectiveness of the derived control laws (\ref{4.3}) and (\ref{4.4b}) with numerical simulations. In (\ref{4.1}) and (\ref{4.5}), we take  $U=1$,  $\sigma=0.1$ , $f_1^e=4 \ $, $f_2^e=3$, $f_3^e=2$ and $f_4^e=6$. Clearly, $f_1^ef_2^e=f_3^ef_4^e$ holds. The tuning parameters in control laws (\ref{4.4b}) are chosen as $k_2=k_3=0.1.$ The first-order upwind scheme is used to discretize the equation (\ref{4.5}) with mesh sizes  $\Delta x=\Delta y=0.01,$ and the time step  $\Delta t=0.002.$ The time evolution of the logarithm of $L^2$-norm of solutions $(f_1,f_2,f_3,f_4)$ starting from the initial value $(1,1,1,1)$ is plotted with solid line in Figure 1. It shows that the solution to the linearized systems with data given above converges to the origin exponentially.
\begin{figure}
	\begin{center}
		\includegraphics[width=7cm]{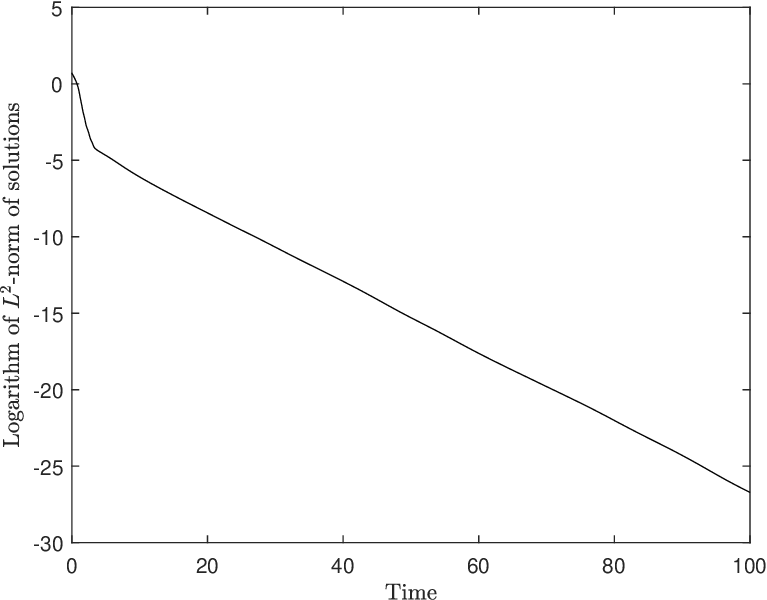}     
		\label{figure2}
		\caption{Time evolution of logarithm of $L^2$-norm of the solution when applying the derived control laws.}                       
	\end{center}                       
\end{figure}

\section{Concluding remarks}
This technical note is concerned with boundary stabilization of multi-dimensional discrete-velocity kinetic models linearized at a uniform steady state. By
exploiting a physically relevant dissipation structure of the models, we design an appropriate Lyapunov function and obtain a boundary control rule. This rule allows for infinitely many choices of boundary conditions ensuring the exponential decay. As examples, specific choices are presented for the two-dimensional coplanar model and verified numerically in Section IV.  

The result above is our first step to design boundary control laws for the multi-dimensional discrete-velocity kinetic models, which are semi-linear hyperbolic systems with relaxation. It is our ongoing work to deal with non-uniform steady states and the nonlinear systems.

\appendix

In this supplemental material, we present a well-posedness theory of $L^2$-solutions to initial-boundary-value problems of the first-order PDE system
\begin{equation} \label{1}
	f_t+\sum_{j=1}^d A_jf_{x_j} =Qf,
\end{equation}
defined on $(t,x) = (t,x_1,\cdots,x_d) \in [0,\infty) \times \Omega$,  with initial data
\begin{equation} \label{2}
	f_0=f_0(x).
\end{equation}
Here $f=
f(t,x) \in \mathbb{R}^N$
is the unknown, $A_j \ (j = 1, \cdots, d)$ and $Q$ are constant matrices in 
$\mathbb{R}^{N\times N}$, and $\Omega \subset \mathbb{R}^d$
is a bounded domain with a Lipschitz boundary. 

We assume that this systems is symmetrizably hyperbolic, that is, there exists a  symmetric positive definite matrix $A_0 \in \mathbb{R}^{N\times N}$ such that
$A_0A_j$ are symmetric for $j=1,\cdots,d.$

To specify a boundary condition, we denote by $\mathbf{n}(x)=(n_1(x),\cdots,n_d(x))$  the unit outward normal vector at boundary point $x \in \partial \Omega$. 
Thanks to the hyperbolicity, the matrix $\sum_{j=1}^d n_j(x)A_j$ can be diagonalized with real eigenvalues, i.e. there exists an invertible matrix $\Pi=\Pi(x)$ such that
\begin{equation} \label{3}
	\begin{aligned}
		\Lambda=\Lambda(x)&=\Pi^{-1}(x)\left[\sum_{j=1}^d n_j(x)A_j\right]\Pi(x)\\& =\begin{pmatrix}
			\Lambda_+(x) & 0 & 0 \\
			0  & 0 & 0 \\
			0 & 0 & \Lambda_-(x)
		\end{pmatrix}.
	\end{aligned}
\end{equation}
Here $\Lambda_+(x) \in \mathbb{R}^{p \times p}$ and $\Lambda_-(x)  \in \mathbb{R}^{n \times n}$ are both diagonal; they contain positive and negative eigenvalues of $\sum_{j=1}^d n_j(x)A_j,$ respectively; $p=p(x)$ and $n=n(x)$ are the numbers of the positive and negative eigenvalues, respectively. Note that $p$ and $n$ may depend on the boundary point $x=(x_1,\cdots,x_d).$
With $\Pi=\Pi(x)$ defined in (\ref{3}) we introduce
\begin{equation} \label{4}
	\zeta(t,x)=\begin{pmatrix}
		\zeta_+(t,x) \\
		\zeta_0(t,x) \\
		\zeta_-(t,x)
	\end{pmatrix}:=\Pi^{-1}(x)f(t,x)
\end{equation}
Here $\zeta_+=\zeta_+(t,x) \in \mathbb{R}^p$ is referred to as the outgoing  variable of system (\ref{1}) at point $x \in \partial \Omega$, $\zeta_-=\zeta_-(t,x) \in \mathbb{R}^n$ is referred to as the incoming  variable, and $\zeta_0=\zeta_0(t,x) \in \mathbb{R}^{N-p-n}$.


Our boundary condition specifies the incoming variable $\zeta_-(t,x)$ at each boundary point $x \in \partial \Omega$ in terms of the outgoing variables $\zeta_+(t,y)$, with $y=x$ or not, such that
\begin{equation}     \label{5}
	\int_{\partial \Omega} f^T(t,x)A_0\left(\sum_{j=1}^d n_j(x) A_j\right)f(t,x) d\sigma \geq 0.
\end{equation}
This implicitly given boundary condition may be local or non-local. A simple example of local boundary conditions is
\begin{equation*} 
	\zeta_-(t,x)=K(x)\zeta_+(t,x)
\end{equation*}
with $K(x) \in \mathbb{R}^{n\times p}$. A non-local boundary condition will be given at the end of this material. 


Let us show that the integrand in (\ref{5}) can be expressed in terms of the incoming and outgoing variables. To do this, we recall the following
fact. 
\begin{lemma}[Lemma 2.1 of \cite{yang2023feedback}] \label{L1}
	For each $x \in \partial \Omega$, there exist three symmetric positive definite matrices $X_+=X_+(x) \in \mathbb{R}^{p\times p}, X_-=X_-(x) \in \mathbb{R}^{n\times n},X_0=X_0(x) \in \mathbb{R}^{(N-p-n)\times (N-p-n)}$ such that 
	\begin{equation*}
		\Pi^{T}(x)A_0\Pi(x)=\begin{pmatrix}
			X_+(x) & 0  & 0\\
			0 & X_0(x) & 0 \\
			0 & 0 & X_-(x)
		\end{pmatrix}.
	\end{equation*}
\end{lemma}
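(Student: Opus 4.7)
The plan is to work with $B(x) := \Pi^T(x) A_0 \Pi(x)$, which is symmetric positive definite because $A_0$ is symmetric positive definite and $\Pi(x)$ is invertible; the target claim is that this matrix is already block diagonal in the partition induced by (\ref{3}). The route is to first establish a commutation relation with $\Lambda$ and then read off the structure block by block.

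For the commutation, I would use symmetrizability: since each $A_0 A_j$ is symmetric, so is $A_0 M$ where $M := \sum_{j=1}^d n_j(x) A_j$. Substituting $M = \Pi \Lambda \Pi^{-1}$ from (\ref{3}) into the identity $A_0 M = (A_0 M)^T$, and using that $\Lambda$ is diagonal, I rearrange to
\begin{equation*}
\Pi^T A_0 \Pi \, \Lambda \;=\; \Lambda \, \Pi^T A_0 \Pi,
\end{equation*}
that is, $B \Lambda = \Lambda B$.

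Next, I would partition $B$ conformally with $\Lambda = \operatorname{diag}(\Lambda_+, 0, \Lambda_-)$ into nine blocks $B_{\alpha\beta}$ with indices in $\{+,0,-\}$; the commutation then yields $\Lambda_\alpha B_{\alpha\beta} = B_{\alpha\beta} \Lambda_\beta$ for every pair. For $(\alpha,\beta) = (+,-)$ the spectra of $\Lambda_+$ (strictly positive) and $\Lambda_-$ (strictly negative) are disjoint, so by Sylvester's equation $B_{+-} = 0$, and likewise $B_{-+} = 0$. For blocks touching the zero middle, for example $(+,0)$, the relation collapses to $\Lambda_+ B_{+0} = 0$, and invertibility of $\Lambda_+$ forces $B_{+0} = 0$; the blocks $B_{0+}, B_{0-}, B_{-0}$ vanish in the same way (or directly by symmetry of $B$). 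Hence $B$ is block diagonal, and setting $X_+ := B_{++}$, $X_0 := B_{00}$, $X_- := B_{--}$ completes the construction, since a principal diagonal block of a symmetric positive definite matrix is itself symmetric positive definite.

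The proof is entirely linear-algebraic and I do not expect a real obstacle; the only subtlety worth underlining is that the three eigenvalue groups of $\sum_{j} n_j(x) A_j$ are genuinely disjoint, which is immediate from the sign-based definition of $\Lambda_+$, $\Lambda_-$ and the fact that the middle block collects precisely the zero eigenvalues. The symmetrizability hypothesis enters only to deliver the commutation $B \Lambda = \Lambda B$; once that is in hand, the block-diagonal decomposition is forced.
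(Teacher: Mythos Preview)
Your argument is correct. The key identity $B\Lambda = \Lambda B$ with $B = \Pi^T A_0 \Pi$ follows exactly as you describe from the symmetry of $A_0 M$, and the block-diagonal structure is then forced by the spectral separation of $\Lambda_+$, $0$, and $\Lambda_-$; the positive definiteness of the diagonal blocks is inherited from that of $B$.

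Note, however, that the present paper does not itself prove this lemma: it is quoted verbatim as Lemma~2.1 of \cite{yang2023feedback} and used as a black box. So there is no ``paper's own proof'' to compare against here. Your argument is in fact the standard one for this type of statement (a symmetrizer is block diagonal in any basis that diagonalizes the boundary matrix, because the symmetrizer must commute with the diagonalized form), and it is presumably close to what appears in the cited reference. One minor stylistic remark: invoking Sylvester's equation for the $(+,-)$ block is fine but slightly heavier than necessary, since all matrices involved are diagonal and the entrywise relation $(\lambda_i^+ - \lambda_k^-) b_{ik} = 0$ already kills the block; your treatment of the $(+,0)$ block via invertibility of $\Lambda_+$ is exactly right.
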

\noindent
Thanks to this Lemma, it follows from (\ref{3}) that
\begin{equation} \label{6}
	\begin{aligned}
		&f^T A_0\left(\sum_{j=1}^dn_j(x)A_j\right)f \\
		=&(\Pi^{-1}f)^{T}\Pi^TA_0\Pi\Pi^{-1}\left(\sum_{j=1}^dn_j(x)A_j\right)\Pi(\Pi^{-1}f)\\
		=&\begin{pmatrix}
			\zeta_+^T & \zeta_0^T & \zeta_-^T
		\end{pmatrix}\begin{pmatrix}
			X_+\Lambda_+ & 0 & 0\\
			0 & 0 & 0 \\
			0 & 0 & X_-\Lambda_-
		\end{pmatrix}\begin{pmatrix}
			\zeta_+ \\
			\zeta_0 \\
			\zeta_-
		\end{pmatrix}\\
		=&\zeta_+^TX_+\Lambda_+\zeta_++\zeta_-^TX_-\Lambda_-\zeta_-.
	\end{aligned}
\end{equation}
Note that $X_+\Lambda_+$ ($X_-\Lambda_-$) is symmetric positive (negative) definite. 
Formula (\ref{6}) also indicates that the inequality (\ref{5}) holds in the trivial case that all the incoming variables vanish.

For the initial-boundary-value problem prescribed above,
our well-posedness result will be established by using the Lumer-Phillips  theorem \cite{curtain2012introduction}  of the semigroup theory  by following \cite[Theorem A.1]{bastin2016stability} for one-dimensional problems.
To do this, we introduce the linear operator $A$:
$$
Af:=-\sum_{j=1}^d A_jf_{x_j}+Qf
$$
and recall the following important fact.
\begin{lemma}[Proposition 6.8 of \cite{chazarain2011introduction}, pp.469] \label{L2}
	Assume that $\Omega$ has a Lipschitz boundary. Then there is a positive constant $C$ such that, for all $f \in L^2(\Omega)$ satisfying $Af \in L^2(\Omega),$ the trace $(\sum_{j=1}^d n_j(x)A_j)f|_{\partial \Omega} \in L^{2}(\partial \Omega)$ is well-defined and satisfies the following estimate 
	$$
	\| (\sum_{j=1}^d n_j(\cdot)A_j)f|_{\partial \Omega}\|_{L^{2}(\partial \Omega)} \leq C(\| f\|_{L^2(\Omega)}+\|Af\|_{L^2(\Omega)}).
	$$
\end{lemma}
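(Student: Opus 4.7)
This is a classical trace theorem for first-order hyperbolic systems. A generic $f\in L^2(\Omega)$ admits no boundary trace, but when $Af\in L^2$ the particular combination $(\sum_j n_j A_j)f$---the normal flux propagated by the hyperbolic characteristics---does. The plan is the standard three-step argument: flatten the boundary by charts; mollify tangentially to regularize $f$ in $x'$ while using the equation to extract normal regularity of the flux; then close with an energy identity that exploits the symmetrizer $A_0$ and the block decomposition (3)--(4) together with Lemma \ref{L1} to upgrade the bound to $L^2(\partial\Omega)$.

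\textbf{Paragraph 2 (Reduction to a half-space).} Take a finite partition of unity whose supports are either interior to $\Omega$ or sit in boundary charts where, after a bi-Lipschitz straightening, $\Omega$ becomes $\{x_d>0\}$ and $\partial\Omega$ becomes $\{x_d=0\}$. Interior pieces contribute nothing to the boundary trace, so it suffices to prove the estimate for $f$ supported in a single boundary chart. After straightening, the pushed-forward system takes the form $\tilde f_t+\sum_j \tilde A_j(x)\tilde f_{x_j}=\tilde Q(x)\tilde f$ with bounded Lipschitz coefficients, the outward normal becomes $-e_d$, and the target quantity is $\|\tilde A_d(\cdot,0)\tilde f(\cdot,0)\|_{L^2(\mathbb{R}^{d-1})}$; symmetrizability and the block structure of Lemma \ref{L1} persist.

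\textbf{Paragraph 3 (Tangential mollification and normal regularity).} Let $\rho_\epsilon$ be a standard mollifier on $\mathbb{R}^{d-1}$ and set $f_\epsilon(x',x_d):=(f(\cdot,x_d)\ast_{x'}\rho_\epsilon)(x')$. Friedrichs' commutator lemma applied to each $\tilde A_j\partial_j$ gives $Af_\epsilon\to Af$ and $f_\epsilon\to f$ in $L^2(\Omega)$ as $\epsilon\to 0$. Rearranging the equation,
\begin{equation*}
\tilde A_d\,\partial_d f_\epsilon=-\sum_{j<d}\tilde A_j\,\partial_j f_\epsilon+\tilde Q f_\epsilon-Af_\epsilon,
\end{equation*}
whose right-hand side lies in $L^2(\Omega)$ since $f_\epsilon$ has $L^2$ tangential derivatives (with $\epsilon$-dependent norms). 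Hence $\tilde A_d f_\epsilon\in H^1\!\bigl([0,1];L^2(\mathbb{R}^{d-1})\bigr)$ and the trace $(\tilde A_d f_\epsilon)|_{x_d=0}$ is well-defined in $L^2(\mathbb{R}^{d-1})$ for each $\epsilon>0$.

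\textbf{Paragraph 4 (Uniform energy bound and main obstacle).} It remains to bound $\|\tilde A_d f_\epsilon|_{x_d=0}\|_{L^2(\mathbb{R}^{d-1})}$ by $\|f\|_{L^2(\Omega)}+\|Af\|_{L^2(\Omega)}$ uniformly in $\epsilon$; then applying the same bound to differences $f_{\epsilon_1}-f_{\epsilon_2}$ shows the traces are Cauchy in $L^2(\mathbb{R}^{d-1})$, and the limit is the sought trace, inheriting the estimate. To this end, introduce the energy
\begin{equation*}
g_\epsilon(x_d):=\int_{\mathbb{R}^{d-1}}f_\epsilon^T M(x)\, f_\epsilon\, dx',
\end{equation*}
where the symmetric matrix $M$ is designed, via Lemma \ref{L1} and the diagonalization (3), so that $M\tilde A_d$ is symmetric and the associated quadratic form dominates $|\tilde A_d f|^2$ \emph{separately} on the positive and negative eigenspaces of $\tilde A_d$. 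Differentiating in $x_d$, using symmetry of $M\tilde A_d$, symmetry of the $\tilde A_0\tilde A_j$ for integration by parts in the tangential directions, and the mollified equation, one reduces $g_\epsilon'(x_d)$ to zeroth-order terms bounded by $C(\|f_\epsilon(\cdot,x_d)\|_{L^2}^2+\|Af_\epsilon(\cdot,x_d)\|_{L^2}^2)$ plus harmless commutator remainders; integrating over $x_d$ and using the compact support produced by the partition of unity yields $|g_\epsilon(0)|\le C(\|f_\epsilon\|_{L^2(\Omega)}^2+\|Af_\epsilon\|_{L^2(\Omega)}^2)$, which by construction of $M$ is equivalent to $\|\tilde A_d f_\epsilon|_{x_d=0}\|_{L^2}^2$. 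The main obstacle is precisely the construction of $M$: the naive choice $M=\tilde A_0$ delivers an identity only for the \emph{signed} quantity $\int f^T\tilde A_0\tilde A_d f|_{x_d=0}dx'$, which through (6) controls only the difference $\zeta_+^T X_+\Lambda_+\zeta_+-|\zeta_-^T X_-\Lambda_-\zeta_-|$ rather than each piece separately. Overcoming this requires splitting into the $+$ and $-$ eigenspaces of $\sum_j n_j\tilde A_j$ and running two complementary energy estimates---one trapping the $\zeta_+$ trace, the other the $\zeta_-$ trace---each exploiting the sign structure of $X_\pm\Lambda_\pm$ supplied by Lemma \ref{L1}.
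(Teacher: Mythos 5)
The paper itself offers no proof of this lemma: it is quoted from Chazarain--Piriou (Proposition~6.8) together with a one-line remark that the smooth, non-characteristic case treated there extends to Lipschitz and characteristic boundaries. Your sketch must therefore stand on its own, and it has two genuine gaps. The first is in Paragraphs 2--3: after a merely bi-Lipschitz straightening, the pushed-forward coefficients $\tilde A_j$ are only $L^\infty$ (the Jacobian of a Lipschitz chart is bounded measurable, not Lipschitz), so your claim of ``bounded Lipschitz coefficients'' is wrong for the Lipschitz domains in the statement. Friedrichs' commutator lemma, which you invoke to get $Af_\epsilon\to Af$, needs at least Lipschitz coefficients, and the step ``$\tilde A_d\partial_d f_\epsilon\in L^2\Rightarrow \tilde A_d f_\epsilon\in H^1(dx_d;L^2(dx'))$'' additionally needs $\partial_d\tilde A_d\in L^\infty$. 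As written, the chart reduction works for $C^{1,1}$ boundaries, not for the corners of the square that the paper actually needs.

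The second gap is fatal: the matrix $M$ on which Paragraph 4 pivots is never constructed, and in the generality you set up it cannot exist, because the separate $L^2$ control of the $\zeta_+$ and $\zeta_-$ traces by the graph norm is \emph{false} for general constant symmetric systems. Your own requirements are incompatible when the $A_j$ do not commute: $M$ must factor through $\tilde A_d$ (at a characteristic boundary $\tilde A_d$ is singular, so the equation only controls $\tilde A_d\partial_d f_\epsilon$, not the kernel components of $\partial_d f_\epsilon$), the products of $M$ with every tangential $A_j$ must be symmetric to permit tangential integration by parts, and $M$ must dominate $|\tilde A_d f|^2$ on both eigenspaces; likewise your fallback of projecting onto the $\pm$ eigenspaces of $A_\nu$ fails because those spectral projections do not commute with the tangential $A_j$, producing first-order cross terms with $\epsilon$-dependent norms that no integration by parts removes. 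Concretely, take $d=2$, $A_1=\diag(1,-1)$, $A_2=\begin{pmatrix}0&1\\1&0\end{pmatrix}$, $Q=0$, $\Omega$ the unit disk (smooth and everywhere non-characteristic, since $A_\nu^2=I$). Solutions of $Af=0$ are $(\mathrm{Re}\,F,\mathrm{Im}\,F)$ with $F$ anti-holomorphic; choosing $F(\bar z)=\sum_{n\ge 0}(n+1)^{-1/2}\bar z^n\in L^2(\Omega)$ and dilating, $f_\rho(x):=f(\rho x)$ is smooth on $\bar\Omega$ with $Af_\rho=0$ and $\|f_\rho\|_{L^2(\Omega)}$ bounded uniformly for $\rho\in[\tfrac12,1)$, while
\begin{equation*}
\|A_\nu f_\rho\|_{L^2(\partial\Omega)}^2=2\pi\sum_{n\ge 0}\frac{\rho^{2n}}{n+1}\longrightarrow\infty\quad(\rho\to 1),
\end{equation*}
so no estimate of the claimed form holds for general symmetric $A_j$. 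A correct proof must exploit the special structure of the kinetic model --- here all $A_j=\Lambda_j$ are \emph{diagonal}, so the system decouples into scalar transport relations $u_i\cdot\nabla f_i\in L^2$ along straight characteristics, and the trace bound reduces to one-dimensional estimates along chords, where the geometry of the boundary relative to each fixed velocity $u_i$ enters (this is also what makes the paper's remark about characteristic Lipschitz boundaries plausible for its square domain). Since your sketch never invokes this commutativity, it cannot be repaired at the level of generality at which you set it up.
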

Note that the original result is presented for smooth  and non-characteristic boundaries, but a careful look at the arguments shows that it remains valid for  Lipschitz and characteristic boundaries.

Thanks to Lemma \ref{L2}, for $f \in L^2(\Omega)$ and $Af \in L^2(\Omega)$ we deduce from (\ref{3}) that
\begin{equation*}
	\begin{aligned}
		&\Pi^{-1}\left(\sum_{j=1}^d n_j(x)A_j\right)f|_{\partial \Omega}\\=&\Pi^{-1}\left(\sum_{j=1}^d n_j(x)A_j\right)\Pi\Pi^{-1}f|_{\partial \Omega}\\
		=&\begin{pmatrix}
			\Lambda_+ & 0 & 0 \\
			0  & 0 & 0 \\
			0 & 0 & \Lambda_-
		\end{pmatrix}
		\begin{pmatrix}
			\zeta_+ \\
			\zeta_0 \\
			\zeta_-
		\end{pmatrix} \\
		=&\begin{pmatrix}
			\Lambda_+ \zeta_+ \\
			0 \\
			\Lambda_- \zeta_-
		\end{pmatrix} \in L^2(\partial \Omega)
	\end{aligned}
\end{equation*}
for $\Pi^{-1}(x)$ is bounded. Moreover, both the incoming variable $\zeta_-$ and the outgoing variable $\zeta_+$ are in $L^2(\partial \Omega)$, for $\Lambda_{\pm}^{-1}(x)$ are bounded. Consequently, it follows from (\ref{6}) that the integrand in (\ref{5}) is integrable. 


Now we are in a position to  define 
\begin{equation*}
	\begin{aligned}
		D(A)=\{&f \in L^2(\Omega):  Af \in L^2(\Omega), f|_{\partial \Omega} \ \text{satisfies the  }\\ &\text{implicitly given boundary condition.} \}
	\end{aligned}
\end{equation*}
and the symmetrizer-weighted inner product:
\begin{equation*}
	\begin{aligned}
		(f,g)_{A_0}:=\int_{\Omega} f(x)^TA_0g(x) dx, \quad \forall f,g \in L^2(\Omega).
	\end{aligned}
\end{equation*}
Corresponding to this inner product, we introduce the adjoint operator of $A$: 
$$
A^*g:=\sum_{i=1}^d A_ig_{x_i}+A_0^{-1}Q^TA_0g.
$$
Thus, for $f,Af,g,A^*g \in L^2(\Omega)$
we can deduce from the divergence theorem that
\begin{equation*}
	\begin{aligned}
		(Af,g)_{A_0}=&\int_{\Omega} (-\sum_{j=1}^d A_jf_{x_j}+Qf)^TA_0gdx\\
		=&\int_{\partial \Omega}f^TA_0\left(\sum_{j=1}^d n_j(x)A_j\right)g d\sigma+(f,A^*g)_{A_0}.
	\end{aligned}
\end{equation*}

%

Furthermore, we introduce
\begin{equation*}
	\begin{aligned}
		D(A^*)=\big\{&g \in L^2(\Omega):A^*g \in L^2(\Omega) \ \text{and} \\[4mm]
		&\int_{\partial \Omega}f^TA_0\left(\sum_{j=1}^d n_j(x)A_j\right)g d\sigma=0\ \\[4mm] &\text{holds for any}  \ f \in D(A) \big\}.
	\end{aligned}
\end{equation*} 
Note that $D(A^*)$ depends on the boundary condition for $f \in D(A).$
With the definition of $D(A^*),$ we have
$$
(Af,g)_{A_0}=(f,A^*g)_{A_0},\quad \forall f \in D(A),\ \forall g \in D(A^*).
$$

Our main result is
\begin{theorem} \label{T1}
	For the initial-boundary-value problem (\ref{1}), (\ref{2})  with $f_0 \in L^2(\Omega)$ and an implicitly given boundary condition satisfying (\ref{5}), if 
	\begin{equation} \label{7}
		\int_{\partial \Omega} g^TA_0(\sum_{j=1}^d n_j A_j)g d\sigma \leq 0,\quad \forall g \in D(A^*),
	\end{equation}  
	then  there exists a unique solution $f \in C([0,\infty);L^2(\Omega)).$
\end{theorem}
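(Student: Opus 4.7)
The plan is to invoke the Lumer--Phillips theorem in its adjoint form (Pazy, Theorem 4.6): a densely defined closed operator on a Hilbert space whose adjoint is also dissipative generates a $C_0$-semigroup of contractions. We work on $L^2(\Omega)$ equipped with the weighted inner product $(\cdot,\cdot)_{A_0}$, which is equivalent to the standard one since $A_0$ is symmetric positive definite. First I would verify that $D(A)$ and $D(A^*)$ are dense: any $f\in C_c^\infty(\Omega)$ has zero trace and therefore trivially satisfies both the boundary condition for $D(A)$ and the boundary integral condition for $D(A^*)$, so $C_c^\infty(\Omega) \subset D(A)\cap D(A^*)$, which is dense.

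The core of the proof is the dissipativity estimate. For $f\in D(A)$, using the symmetry of the matrices $A_0 A_j$, the divergence theorem, and the $L^2$ trace provided by Lemma \ref{L2}, one obtains
$$
(Af,f)_{A_0} = -\tfrac{1}{2}\int_{\partial\Omega} f^T A_0\Bigl(\sum_{j=1}^d n_j A_j\Bigr) f\, d\sigma + \int_\Omega f^T A_0 Q f\, dx.
$$
Hypothesis (\ref{5}) makes the boundary integral nonnegative, and the interior term is bounded by $\omega_1 \|f\|_{A_0}^2$, where $\omega_1$ is the largest eigenvalue of $\tfrac{1}{2}A_0^{-1}(A_0 Q + Q^T A_0)$. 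An analogous computation for $A^*$ yields
$$
(A^* g,g)_{A_0} = +\tfrac{1}{2}\int_{\partial\Omega} g^T A_0\Bigl(\sum_{j=1}^d n_j A_j\Bigr) g\, d\sigma + \int_\Omega g^T Q^T A_0 g\, dx,
$$
where the sign on the boundary term is now reversed. Hypothesis (\ref{7}) is precisely what is needed to control this: it makes the boundary term nonpositive, while the interior term is bounded by $\omega_2\|g\|_{A_0}^2$. Picking $\omega=\max(\omega_1,\omega_2)$, both $A-\omega I$ and its adjoint $A^*-\omega I$ are dissipative.

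Closedness of $A$ is then a short argument: if $f_n\to f$ and $Af_n\to h$ in $L^2(\Omega)$, then $Af=h$ distributionally so $Af\in L^2(\Omega)$, and Lemma \ref{L2} applied to $f_n-f$ shows that $(\sum_j n_j A_j)(f_n-f)\to 0$ in $L^2(\partial\Omega)$, so the implicit boundary condition passes to the limit and $f\in D(A)$. Invoking Pazy's theorem produces a contraction $C_0$-semigroup $S(t)$ generated by $A-\omega I$, hence $e^{\omega t}S(t)$ is the $C_0$-semigroup generated by $A$; applied to $f_0\in L^2(\Omega)$ it furnishes the unique mild solution $f\in C([0,\infty);L^2(\Omega))$.

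The main obstacle I anticipate is confirming that the \emph{formal} adjoint $A^*$ with the domain prescribed in the text coincides with the \emph{Hilbert} adjoint of $A$ under $(\cdot,\cdot)_{A_0}$. The easy inclusion follows from the Green--type identity already displayed in the excerpt. The reverse inclusion requires showing that any $g\in L^2(\Omega)$ for which $f\mapsto (Af,g)_{A_0}$ is continuous on $D(A)$ must have $\sum_j A_j g_{x_j}\in L^2(\Omega)$ in the distributional sense, which one extracts by testing against $C_c^\infty(\Omega)$; only then can Lemma \ref{L2} be used to give the trace of $(\sum_j n_j A_j)g$ meaning and translate the continuity bound into the boundary integral identity defining $D(A^*)$. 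Once these technicalities are dispatched, the rest of the argument is a direct application of the semigroup machinery.
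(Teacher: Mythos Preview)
Your proposal is correct and follows essentially the same route as the paper's own proof: both verify density and closedness of $A$ via $C_c^\infty(\Omega)\subset D(A)$ and the trace estimate of Lemma~\ref{L2}, derive the two dissipativity inequalities for $A$ and $A^*$ from (\ref{5}) and (\ref{7}) by the divergence theorem in the $A_0$-weighted inner product, and then invoke a Lumer--Phillips corollary (the paper cites Curtain--Zwart, you cite Pazy) to produce the $C_0$-semigroup. The technical issue you flag---whether the formally defined $A^*$ with the prescribed domain $D(A^*)$ coincides with the true Hilbert-space adjoint---is handled only implicitly in the paper as well, so your caution there is warranted but does not set your argument apart.
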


\begin{proof}
	\textbf{Step 1: $A$ is a densely defined closed operator.} It is easy to see that $D(A)$ is dense in $L^2(\Omega)$ since $C_0^\infty(\Omega)\subset D(A)$. 
	For the closedness, we suppose that there exist a sequence $\{f^k\}_{k\geq 1} \in D(A)$ such that the two sequences $f^k,Af^k$ converge to $f,g$ in $L^2(\Omega)$ respectively. Clearly $Af^k$ converges to $Af$ in the sense of distribution. Consequently, $Af=g$ in the sense of distribution, by uniqueness of the limit in the sense of distribution. Note that $g \in L^2(\Omega)$, hence $Af \in L^2(\Omega)$ and $ Af=g$ in $L^2(\Omega).$ We arrive at $f^k,Af^k$ converge to $f,Af$ in $L^2(\Omega)$ respectively. Applying the estimate in Lemma \ref{L2},
	the incoming and outgoing variables of $f^k|_{\partial \Omega}$ converge to incoming and outgoing variables of $f|_{\partial \Omega}$ respectively in $L^2(\partial \Omega)$, which implies that $f$ satisfies the boundary conditions in $L^2(\partial \Omega)$. Hence $f \in D(A)$ and $A$ is a closed operator.

	\textbf{Step 2: Estimating $(Af,f)_{A_0}$ and $(A^*g,g)_{A_0}$.} For $f \in D(A)$, we deduce from the divergence theorem and condition (\ref{5}) that
	\begin{equation*}
		\begin{aligned}
			(Af,f)_{A_0}=&\int_\Omega (-\sum_{j=1}^d A_j f_{x_j}+Qf)^TA_0fdx \\
			=&-\frac{1}{2}\int_{\partial \Omega} f^TA_0(\sum_{j=1}^d n_j A_j)f d\sigma+\int_{\Omega} f^TQ^TA_0f \\
			\leq& M(f,f)_{A_0}
		\end{aligned}
	\end{equation*}
	with $M$ a positive constant. Similarly, it follows from (\ref{7}) that
	\begin{equation*}
		\begin{aligned}(A^*g,g)_{A_0}=&\int_\Omega (\sum_{j=1}^d A_j g_{x_j}+A_0^{-1}Q^TA_0g)^TA_0g \\
			=&\frac{1}{2}\int_{\partial \Omega} g^TA_0(\sum_{j=1}^d n_j A_j)g d\sigma+\int_{\Omega} g^TA_0Qg \\
			\leq& M(g,g)_{A_0}.
		\end{aligned}
	\end{equation*}

	With the facts established in Steps 1 and 2, we use the  Lumer-Phillips theorem \cite[Corallary 2.2.3]{curtain2012introduction} and conclude that the operator $A$ generates a unique $C_0$-semigroup, which gives a unique solution $f \in C([0,\infty);L^2(\Omega))$ to the initial-boundary-value problem.
\end{proof}

Finally, we justify the condition (\ref{7}) with two examples by explicitly specifying 
boundary conditions. First of all, we define
\begin{equation*}
	\xi(x)=\begin{pmatrix}
		\xi_+(x) \\
		\xi_0(x) \\
		\xi_-(x)
	\end{pmatrix}:=\Pi^{-1}(x)g(x).
\end{equation*}
Note that $\xi_+=\xi_+(x) \in \mathbb{R}^{p}$  and $\xi_-=\xi_-(x) \in \mathbb{R}^{n}$ are the incoming and outgoing variables corresponding to  the adjoint operator $A^*$, respectively.
Thus the integrand in the definition of $D(A^*)$ can be written as, similar to (\ref{6}),

\begin{equation*} 
	\begin{aligned}
		f^TA_0\left(\sum_{j=1}^d n_j(x)A_j\right)g =\zeta_+^TX_+\Lambda_+\xi_++\zeta_-^TX_-\Lambda_-\xi_-.
	\end{aligned}
\end{equation*}

\noindent
\textbf{Example 1. } For the linear local boundary condition corresponding to the operator $A$:
\begin{equation*} 
	\zeta_-(x)=K(x)\zeta_+(x)
\end{equation*}

with $K(x) \in \mathbb{R}^{n\times p},$  the integrand in the definition of $D(A^*)$  vanishes obviously if   
\begin{equation*} 
	\xi_+(x)=L(x)\xi_-(x)
\end{equation*} 
with 
$$
L(x)=-(X_+\Lambda_+)^{-1}K^T(x)(X_-\Lambda_-).
$$
Then it is straightforward to verify the inequalities (\ref{5}) and (\ref{7}) if $K(x)$ is sufficiently small.
\\

\noindent
\textbf{Example 2.}
This example is about non-local boundary conditions 
for the coplanar model, considered in our paper, with $\Omega=(0,1)\times (0,1)$. For this model,  we have $A_0=\Pi(x)=I_4$. 
The non-local boundary condition (14) together with (15b) is
\begin{equation*}  
	\begin{aligned}
		f_1(0,y)&=0, \quad y \in (0,1), \\
		f_2(1,y)&=0, \quad y \in (0,1), \\
		f_4(x,1)&=0, \quad x \in (0,1),\\
		f_3(x,0)&=0, \quad x \in (0,\frac{1}{3}) \cup (\frac{2}{3},1),\\
		f_3(x,0)&=k_2f_2(0,3x-1)+k_3f_4(x,0), \quad x\in (\frac{1}{3},\frac{2}{3}),
	\end{aligned}
\end{equation*}
with $k_2$ and $k_3$ two constants to be determined so that the inequality (\ref{5}) holds. 

For this non-local boundary condition, the integral in the definition of $D(A^*)$ becomes
\begin{equation*}
	\begin{aligned}
		\mathcal{I}:=&-\int_0^1 f_1g_1|_{x=0}dy+\int_0^1f_2g_2|_{x=0}dy\\
		&+\int_0^1 f_1g_1|_{x=1}dy-\int_0^1f_2g_2|_{x=1}dy\\
		&+\int_0^1 f_3g_3|_{y=1}dx-\int_0^1f_4g_4|_{y=1}dx \\
		&-\int_0^1 f_3g_3|_{y=0}dx+\int_0^1f_4g_4|_{y=0}dx\\
	\end{aligned}
\end{equation*}
\begin{equation*}
	\begin{aligned}
		=&\int_0^1f_2g_2|_{x=0}dy
		+\int_0^1 f_1g_1|_{x=1}dy\\
		&+\int_0^1 f_3g_3|_{y=1}dx 
		-\int_{1/3}^{2/3} f_3g_3|_{y=0}dx+\int_0^1f_4g_4|_{y=0}dx\\
		=&\int_0^1f_2(0,y)\left(g_2(0,y)-\frac{k_2}{3}g_3(\frac{y+1}{3},0)\right)dy
		\\&+\int_{1/3}^{2/3} f_4(x,0)\left(g_4(x,0)-k_3g_3(x,0)\right)dx\\
		&+\int_{[0,1]\backslash (\frac{1}{3},\frac{2}{3})} f_4g_4|_{y=0}dx+\int_0^1 f_1g_1|_{x=1}dy \\
		&+\int_0^1 f_3g_3|_{y=1}dx 
		.
	\end{aligned}
\end{equation*}
Note that the outgoing variables $f_1|_{x=1},f_2|_{x=0},f_3|_{y=1},f_4|_{y=0}$
can take any value, then the "dual" boundary condition of $g$ is
\begin{equation*}  
	\begin{aligned}
		g_2(0,y)&=\frac{k_2}{3}g_3(\frac{y+1}{3},0), \quad y \in (0,1), \\
		g_1(1,y)&=0, \quad y \in (0,1), \\
		g_3(x,1)&=0, \quad x \in (0,1), \\
		g_4(x,0)&=0, \quad x \in (0,\frac{1}{3})\cup (\frac{2}{3},1),\\
		g_4(x,0)&=k_3g_3(x,0),\quad x \in (\frac{1}{3},\frac{2}{3}),
	\end{aligned}
\end{equation*}
which makes the integral $\mathcal{I}$ vanish.
With the non-local boundary condition for $f$ and the dual boundary condition for $g$, it is straightforward to verify the two inequalities (\ref{5}) and (\ref{7}) under 
\begin{equation*} 
	|k_2|\leq \sqrt{\frac{3}{2}},\quad |k_3|\leq \sqrt{\frac{1}{2}}.
\end{equation*}

\end{document}